%!TEX program =pdflatex
\documentclass[a4paper, 10pt, leqno]{article}

\usepackage[T1]{fontenc}		     % codifica dei font
\usepackage[utf8]{inputenc}			 % codifica di input
\usepackage[english]{babel}
% %  \usepackage{showkeys}                % mostra tutti i label e le referenze
%\usepackage{refcheck}
% % \\
% % \usepackage{txfonts}
% % etichette del tipo [Mor07]
% % %%%%%%%%%%%%%%%%%%
% % %	Matematica   %
% % %%%%%%%%%%%%%%%%%%  

\usepackage{amsmath}
\usepackage{amssymb}
\usepackage[numbers]{natbib}

% % \usepackage{srcltx}
%\usepackage{asymptote}
\usepackage{amsthm}
	\theoremstyle{plain}
				% numbered
% % absolutely\
		% numbered

		\newtheorem{thm}{Theorem}[section]	% numbered within each
% % section
			
				%

% % with Theorem
		\newtheorem{lem}[thm]{Lemma}		% numbered along with
% % Theorem
		\newtheorem{prop}[thm]{Proposition}
			% numbered

% % with Theorem

	\theoremstyle{definition}
		\newtheorem{defn}[thm]{Definition}	% numbered along with
% % Theorem
				% numbered along with
% % Theorem
%		\newtheorem{prob}[thm]{\sc Exercise}		% numbered
% % with Theorem\

% % with Theorem

	\theoremstyle{remark}
		\newtheorem{rem}[thm]{Remark}		% numbered along with
% % Theorem
				% numbered

% % with Theorem
%		\newtheorem{convention}[thm]{\sc Convention}	% numbered

% % with Theorem

\numberwithin{equation}{section}	% equation numbering

\usepackage{mathtools}		% per i simboli := e =:
\usepackage{mathrsfs}		% per il math script
\usepackage{eucal}			% per il mathcal + figo

\usepackage{braket}			% per il comando \Set: definizione
% % automatica di insiemi
\usepackage[all,pdf]{xy}		% per i diagrammi commutativi
% % \usepackage[cmtip,arrow,line,frame,all]{xy}
\setcounter{MaxMatrixCols}{12}

%%%%%%%%%%%%%%%%%%
%	Finezze tipografiche e impostazioni di pagina
%%%%%%%%%%%%%%%%%%
\usepackage{mparhack}		% fix bug for \marginpar
\usepackage{relsize}			% dimensioni testo

\usepackage{a4wide}		% aumenta i margini di pagina

\usepackage{booktabs}		% per fare bene le tabelle
\usepackage{multirow}		% per celle su pi righe nelle tabelle
\usepackage{caption}		% per le didascalie di tabelle e figure
\captionsetup{font=small, labelfont=bf}
% 	\captionsetup[table]{position = top}	% per inserire le didascalie di
% % una tabella al di sopra
% % della stessa
\usepackage{rotating}
\usepackage{subfig}

\usepackage{varioref}		% per indicazione di pagina nei riferimenti
% % incrociati
\usepackage{footmisc}

%%%%%%%%%%%%%%%%%%
%	Altro
%%%%%%%%%%%%%%%%%%

\usepackage{graphicx}		% per le figure
\usepackage{epsfig}
% % \usepackage{subfigure}

\usepackage{enumerate}		% per impostare facilmente gli elenchi numerati
\usepackage[colorlinks=true, linkcolor=black, citecolor=black,
urlcolor=blue]{hyperref}		
\mathtoolsset{showonlyrefs}
\newcommand{\inpro}[2]{\left\langle #1,#2\right\rangle}

\newcommand{\spfl}[1]{\text{sf}(#1)}

\newcommand{\email}[1]{\href{mailto:#1}{\textsf{#1}}}

%%%%%%%%%%%%%%%%%%%%%%%%%%%%%%%%%%%%%%%%%%%%%%%%%%%%%%%%%%%%%%%%%%%%%%%%%%%

		% gruppo unitario
		% gruppo unitario
	
%%\newcommand{\traspm}[1]{{#1}^{-\mathsf{T}}}	%%\succnsi			%
					
		% the complex plane
		% the complex plane
		
\newcommand{\iMor}{\mathrm{m^-}}		% indice di Morse

\newcommand{\R}{\mathbb{R}}	

\newcommand{\C}{\mathbb{C}}		% the complex plane
		% the complex plane
		% the complex plane
		% the complex plane
		% the complex plane
		% gruppo unitario
		% gruppo unitario
		% gruppo unitario
		% gruppo unitario
		% gruppo ortogonale
		% gruppo ortogonale
		% gruppo ortogonale
		% gruppo unitario
		% gruppo unitario
		% gruppo unitario

% % \newcommand{\irel}{I{\textup{rel}}}		% indice geometrico
		% indice geometricoù
				% indice geometrico
		% indice geometrico
		% indice geometrico
% % \newcommand{\OO}{\mathrm{O}}

\newcommand{\Lag}[1]{\Lagr({#1})}
\newcommand{\Lagr}{\Lambda}

%\newcommand{\Ddt}{\tfrac{\mathrm{D}}{\mathrm{d}t}}
%\newcommand{\Ddt}{\tfrac{D}{dt}}

%\newcommand{\Dds}{\tfrac{\mathrm{D}}{\mathrm{d}s}}

% % \newcommand{\iclm}{i_{\textup{CLM}}}		% indice di
%\newcommand{\iRS}{\iota_{\textup{RS}}}		% indice di Cappell-Lee-Miller

\newcommand{\Mat}[1]{\mathrm{Mat}(#1,\R)}
\newcommand{\Sym}[1]{\mathrm{Sym}(#1,\R)}

			% norma
			% norma
\let\d\relax
\newcommand{\d}{\mathrm{d}}				% differenziale
% % (esterno)
					%forme
% % bilineariff

\newcommand{\norm}[1]{\left\| #1 \right\|}			% norma
		% norma
% % operatoriale
		% prod
% % scalare masse
		% prod
% % scalare masse
			% norma
			% norma
				
				%

		% indice esteso
		% coindice esteso
%\newcommand{\coiMor}{\coindex}	

    % energia cinetica
	% Lagrangiana
	%
		% the natural numbers

% % \renewcommand{\S}{\mathbb{S}}	% ellissoide d'inerzia

\newcommand{\iCLM}{\mu^{\scriptscriptstyle{\mathrm{CLM}}}}

	% potenziale\
	% potenziale
		% the integer numbers
		% the integer numbers

% % \newcommand{\X}{\mathcal{X}}	% configuration space
	% collisionless configuration space

% % % % % % % % % % % % % % % % % % % % % % % % % % % % % % % % % % % % % % % %
% %
\newcommand{\coiMor}{\mathrm{m^+}}

\newcommand{\noo}[1]{\overset {\mbox{%
\lower1pt\hbox{${\scriptscriptstyle o}$}}}n^{\mbox{%
\lower2pt\hbox{$\scriptscriptstyle #1$}}}}

		% diagonal matrix
			% spectral flow
\DeclareMathOperator{\sgn}{sgn}		% signature
\DeclareMathOperator{\sign}{sign}		% signature
			% trace
		% rank
		% order

\DeclareMathOperator{\dom}{dom}

% Altrola

\renewcommand{\geq}{\geqslant}
\renewcommand{\hat}{\widehat}

\renewcommand{\=}{\coloneqq}			% definisce :=
			% definisce =:

%\newcommand{\email}[1]{\href{mailto:#1}{\textsf{#1}}}

\usepackage{bm}

\newcommand{\Id}{I}
               %% configuration space

%%%%%%%%%%%%%%%%%%%%%%%%%%%%%%%%%%%%%%%%%%%%%%%%%%%%%%%%%%%%%%%%%%%%%%%%%%%%%%%%%%%%%%%%

\title{Morse Index Theorem for Sturm-Liouville Operators on the Real Line}
\author{ Ran Yang \thanks{The author is supported by  the National Natural Science Foundation of China (No. 12001098, No. 42264007) and the  Natural Science Foundation of Jiangxi Province(No.20232BAB211005)}, \quad Qin Xing\thanks{The author is supported  by  the National Natural Science Foundation of China (No. 12201278) and the Natural Science Foundation of Shandong Province (No. ZR2022QA076)}}
\date{\today}

%%%%%%%%%%%%%%%%%%%%%%%%%%%%%%%%%%%%%%%%%%%%%%%%%%%%%%%%%%%%%%%%%%%%%%%

\date{\today}
\begin{document}
 \maketitle

\begin{abstract}
	The classical Morse index theorem establishes a fundamental connection between the Morse index-the number of negative eigenvalues that characterize key spectral properties of linear self-adjoint differential operators-and the count of corresponding conjugate points. In this paper, we extend these foundational results to the Sturm-Liouville operator on $\R$. In particular, for autonomous Lagrangian systems, we employ a geometric argument to derive a lower bound for the Morse index.  As concrete applications, we  establish a criterion for detecting instability in traveling waves within gradient reaction-diffusion systems.
\vskip0.2truecm
\noindent
\textbf{AMS Subject Classification: 37B30, 53D12,58J30,34B24,35K57.}
\vskip0.1truecm
\noindent
\textbf{Keywords:} Morse index, Maslov index, Spectral flow, Sturm-Liouville theory, Reaction-diffusion equation, Spectral unstable.
\end{abstract}

%%%%%%%%%%%%%%%%%%%%%%%%%%%%%%%%%%%%%%%%%%%%%%%%%%%%%%%%%%%%
%%%%%%%
%%%%%%%
%%%%%%%
%%%%%%%%%%%%%%%%%%%%%%%%%%%%%%%%%%%%%%%%%%%%%%%%%%%%%%%%%%%%

\section{Introduction and description of the problem}

We consider the following $n$-dimensional Sturm-Liouville operator:  
\begin{align}\label{eq:S.L.}
	Lw := -\frac{\d}{\d t}(P(t) \dot{w}(t) + Q(t) w(t)) + Q^\top(t) \dot{w}(t) + R(t) w(t) \text{ with } \dom(L) = W^{2,2}(\R, \R^n).
\end{align}  
Here, the coefficient matrices satisfy  
$ P(t) \in C^1(\R, \Sym{n}) $, $ Q(t) \in C^1(\R, \Mat{n}) $, and $ R(t) \in C(\R, \Sym{n}) $, where $\Mat{n}$ is the set of all $n \times n$ matrices and $\Sym{n}$ is the set of all $n \times n$ symmetric matrices. 

We impose the following assumptions:  
\begin{itemize}  
	\item[(L1)] The matrices $ P(t) $, $ Q(t) $, and $ R(t) $ converge to their respective limits $ P_{\pm} $, $ Q_{\pm} $, and $ R_{\pm} $ as $ t \to \pm \infty $. Furthermore, there exist positive constants $ C_1 $, $ C_2 $, and $ C_3 $ such that  
	\begin{align*}  
		\norm{P(t)} \geq C_1, \quad \norm{Q(t)} \leq C_2, \quad \text{and} \quad \norm{R(t)} \leq C_3 \quad \forall t \in \R.  
	\end{align*}  
	\item[(L2)] The block matrices  
	\[
		\begin{pmatrix}  
			P_- & Q_- \\  
			Q_-^\top & R_-  
		\end{pmatrix}
		\quad \text{and} \quad
		\begin{pmatrix}  
			P_+ & Q_+ \\  
			Q_+^\top & R_+  
		\end{pmatrix}  
	\]   
	are both positive definite.  
\end{itemize}  

The starting point of our study is the classical Sturm-Liouville theory, which asserts that the Morse index of one-dimensional Sturm-Liouville operator equals the number of zeros of the eigenfunction for the first non-negative eigenvalue\cite[Theorem 2.3.3]{kapitula2013spectral}.

However, when $n > 1$, the results of Sturm-Liouville  theorem no longer apply, and determining the Morse index becomes significantly more challenging.

In \cite{AR85}, Arnold observed that, for a system of differential equations, a useful generalization of the notion of ‘oscillation’ (or number of zeros) is given by the Maslov index\cite{Arn67,RS93,CLM94}, a topological invariant counting signed intersections of Lagrangian planes in a symplectic vector space. Increasing attention has been devoted to the Maslov index and its applications to study the stablity of standing pulses, solitary waves  in reaction-diffusion equation \cite{CDB09,CDB09-1,CDB09-2,Jo85} and the linear stability elliptic relative equilibria in $N$-body problem\cite{HO16,BHPT20}. The use of the Maslov index as a tool for determining the Morse index(i.e.,unstable eigenvalue) was pioneered in the works of Jones \cite{Jo85} and Bose \& Jones \cite{BJ95}. Further related studies can be found in the literature, including \cite{CH07,HLS18,HS20,HPWX20}.

Our primary objective is to determine the number of negative eigenvalues of $ L $, i.e., its Morse index. To achieve this, we establish a connection between the Morse index and the Maslov index. Specifically, we demonstrate that the Morse index can be expressed in terms of the Maslov index with  positive direction at every crossing, i.e., the crossing form is positive definite at every crossing. Although the Maslov index is conceptually less elementary than the Morse index, it offers a computational advantage, as it can be determined numerically in a relatively straightforward manner\cite{CDB09,CDB09-2,HO16}.

In the standard symplectic space \( \left( \R^{2n}, \omega \right) \), we denote by \( \Lag{n} \) the Lagrangian Grassmannian, which is the collection of all Lagrangian subspaces of \( \left( \R^{2n}, \omega \right) \). 

By applying a symplectic change of coordinates, the equation \( Lu = 0 \) can be rewritten as the Hamiltonian system:
\begin{align}\label{eq:h.s. eq.}
	\dot{z} = J B(t) z, \quad \text{where} \quad B(t) := \begin{pmatrix}
		P^{-1}(t) & -P^{-1}(t) Q(t) \\
		-Q^\top(t) P^{-1}(t) & Q^T(t) P^{-1}(t) Q(t) - R(t)
	\end{pmatrix}.
\end{align}
Given condition (L1), the limit matrices \( \lim\limits_{t \to +\infty} B(t) := B(+\infty) \)  and \( \lim\limits_{t \to -\infty} B(t) := B(-\infty) \) exist and are both well-defined.
\begin{prop}\cite[Proposition 2.3]{HPWX20}\label{pro:fredholm}
	$L$ is Fredholm if and only if both $JB(+\infty)$ and $JB(-\infty)$ are hyperbolic, meaning that their spectra lie off the imaginary axis.
\end{prop}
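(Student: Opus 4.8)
The plan is to pass to the first-order Hamiltonian reformulation $\dot z = A(t) z$ with $A(t) := JB(t)$ and then invoke the theory of exponential dichotomies. The symplectic change of coordinates already introduced identifies the inhomogeneous equation $Lw = f$ with a first-order system for the pair $z = (w, p)$, where $p := P(t)\dot w + Q(t) w$ is the conjugate momentum. Under (L1) the matrix $P(t)$ is uniformly invertible and $P, Q$ are bounded together with their first derivatives, so $w \mapsto z$ is a bounded isomorphism from $W^{2,2}(\R,\R^n)$ onto $W^{1,2}(\R,\R^{2n})$; a routine bookkeeping then shows that $L : W^{2,2}(\R,\R^n) \to L^2(\R,\R^n)$ is Fredholm if and only if the bounded operator $F_A := \dt - A(\cdot) : W^{1,2}(\R,\R^{2n}) \to L^2(\R,\R^{2n})$ is Fredholm, with $\ker L \cong \ker F_A$ and $\coker L \cong \coker F_A$. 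This reduces the statement to characterizing the Fredholm property of $F_A$ for the asymptotically constant family $A(t) \to A_\pm := JB(\pm\infty)$.

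For such operators I would use Palmer's theorem: $F_A$ is Fredholm precisely when the system $\dot z = A(t) z$ admits exponential dichotomies on both half-lines $[0,+\infty)$ and $(-\infty, 0]$. I then combine two classical facts. First, a constant-coefficient system $\dot z = M z$ admits an exponential dichotomy on a half-line if and only if $M$ is hyperbolic, i.e. $\spec(M) \cap i\R = \emptyset$, the dichotomy projections being the spectral projections onto the stable and unstable subspaces of $M$. Second, the roughness (persistence) of exponential dichotomies under perturbations that vanish at infinity transfers the dichotomy of each limit system $\dot z = A_\pm z$ to the genuine system on the corresponding half-line, and conversely a half-line dichotomy forces the limit matrix to be hyperbolic. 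Hence $F_A$ admits dichotomies on both half-lines exactly when $A_+ = JB(+\infty)$ and $A_- = JB(-\infty)$ have no spectrum on the imaginary axis, which is the asserted condition.

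It remains to make the two implications explicit. For necessity I argue contrapositively: if, say, $JB(+\infty)$ has a purely imaginary eigenvalue $i\beta$ with eigenvector $v$, then $t \mapsto e^{i\beta t} v$ is a bounded, non-decaying solution of the limit system at $+\infty$; truncating translates of it by bump functions supported on intervals $[k, 2k]$ yields a singular (Weyl) sequence $z_k$ with $\norm{z_k}_{W^{1,2}}$ bounded below, $\norm{F_A z_k}_{L^2} \to 0$, and $z_k \rightharpoonup 0$, so the range of $F_A$ is not closed and $F_A$ is not Fredholm. For sufficiency, the two half-line dichotomies provide uniformly bounded Green's kernels on $\R_-$ and $\R_+$; gluing them with a smooth partition of unity yields a parametrix $G$ for which $F_A G - \mathrm{Id}$ and $G F_A - \mathrm{Id}$ are integral operators with compactly supported, smoothing kernels, hence compact, so $F_A$ is Fredholm. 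The step I expect to be the main obstacle is not any single estimate but the careful verification of the reduction in the first paragraph — that the Legendre-type substitution is genuinely an isomorphism of the Sobolev spaces under (L1), and that the formal self-adjointness of $L$ is faithfully encoded in $A(t) \in \ssp(2n,\R)$, so that the stable and unstable subspaces at $\pm\infty$ come out Lagrangian of equal dimension $n$ (whence $\ind F_A = 0$, consistent with the self-adjointness of $L$); it is precisely this symplectic structure that will later convert the Fredholm index into the Maslov index.
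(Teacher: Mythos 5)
Your route is, in substance, the same one that stands behind this proposition: the paper itself gives no proof but imports the statement from \cite[Proposition 2.3]{HPWX20}, where the argument is precisely the reduction of $Lw=f$ to the first-order Hamiltonian system $\dot z=JB(t)z+\cdots$ followed by the classical characterization (Palmer; cf.\ also \cite{AM03}) of when $\tfrac{\d}{\d t}-JB(\cdot)\colon W^{1,2}(\R,\R^{2n})\to L^{2}(\R,\R^{2n})$ is Fredholm in terms of exponential dichotomies on the two half-lines, hence in terms of hyperbolicity of $JB(\pm\infty)$. Your Weyl-sequence argument for necessity and your parametrix-by-gluing argument for sufficiency are exactly the standard mechanisms underneath that citation, and those parts are sound.

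There is, however, one concretely false step in your reduction: the claim that $w\mapsto z=(P\dot w+Qw,\,w)$ is a bounded isomorphism of $W^{2,2}(\R,\R^{n})$ \emph{onto} $W^{1,2}(\R,\R^{2n})$. That map is injective with proper image, never surjective: a generic pair $(p,w)\in W^{1,2}(\R,\R^{2n})$ does not satisfy the Legendre constraint $p=P\dot w+Qw$ (heuristically, one free $\R^{n}$-valued function cannot fill out two). As stated, the ``routine bookkeeping'' of your first paragraph does not get off the ground. The repair is easy and should be made explicit: writing $F_{A}z:=\dot z-JB(t)z$ with $z=(p,w)$, the $w$-row of the equation $F_{A}z=(-f,0)$ reads $\dot w-P^{-1}(p-Qw)=0$, so the constraint is \emph{enforced by the equation itself} rather than by the choice of function space; consequently $z\mapsto w$ identifies $\ker F_{A}$ with $\ker L$, any solution of $F_{A}z\in L^{2}\times\{0\}$ automatically has $w\in W^{2,2}$ (bootstrap from $\dot w=P^{-1}(p-Qw)$), and closed range together with finite-dimensional cokernel transfers in both directions — alternatively, since $L$ is self-adjoint and hyperbolicity of the Hamiltonian limits forces $\dim V^{\pm}(JB(\pm\infty))=n$ and hence $\ind F_{A}=0$, it suffices to match kernels and closed ranges. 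A smaller overreach in the same paragraph: (L1) bounds $P,Q,R$ themselves but says nothing about their derivatives, so ``bounded together with their first derivatives'' is not available from the hypotheses (it is implicitly needed even for $L\colon W^{2,2}\to L^{2}$ to be bounded); the dichotomy argument itself only uses $JB(t)\to JB(\pm\infty)$ in norm, which (L1) does provide.
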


Let \( \gamma_{\tau} \) denote the fundamental matrix solution of the linear Hamiltonian system:
\begin{align*}
	\begin{cases}
		\dot{\gamma}_{\tau}(t) = J B(t) \gamma_{\tau}(t), \quad t \in \R, \\
		\gamma_{\tau}(\tau) = I,
	\end{cases}
\end{align*}
which satisfies the semigroup property: \( \gamma_{\sigma}(\xi)\gamma_{\tau}(\sigma) = \gamma_{\tau}(\xi) \) for all \( \xi, \sigma \in \R \).

For \( \tau \in \R \), we define the stable and unstable subspaces as follows:
\begin{align}\label{eq:invariance space}
	E^s(\tau) := \left\{ v \in \R^{2n} \mid \lim_{t \to +\infty} \gamma_{\tau}(t) v = 0 \right\}, \quad E^u(\tau) := \left\{ v \in \R^{2n} \mid \lim_{t \to -\infty} \gamma_{\tau}(t) v = 0 \right\}.
\end{align}
It is well-known (Cfr. \cite{HP17} and references therein) that the path $\tau \mapsto E^s(\tau)$ and $\tau \mapsto E^u(\tau)$ are both Lagrangian and to each ordered pair of Lagrangian paths we can assign an integer known in literature as Maslov index of the pair and denoted by $\iCLM$(Subsection \ref{sec:maslov}). These subspaces satisfy  \( E^{s/u}(\tau) = \gamma_{\sigma}(\tau) E^{s/u}(\sigma) \) for any \( \sigma, \tau \in \R \).

We define the asymptotic stable subspaces at infinity as:
\begin{align*}
	\begin{aligned}
		E^s(+\infty) &:= \left\{ v \in \R^{2n} \mid \lim_{t \to +\infty} \exp(t J B(+\infty)) v = 0 \right\} = V^-(J B(+\infty)), \\
		E^u(-\infty) &:= \left\{ v \in \R^{2n} \mid \lim_{t \to -\infty} \exp(t J B(-\infty)) v = 0 \right\} = V^+(J B(-\infty)).
	\end{aligned}
\end{align*}
Under the condition (L2), by Lemma \ref{lem:hyperbolic_matrix} and \ref{space convergence}, we have that 
\begin{align}
	\lim_{\tau \to +\infty} E^s(\tau) = V^-(JB(+\infty)) \quad \text{and} \quad \lim_{\tau \to -\infty} E^u(\tau) =V^+(JB(-\infty)),
\end{align}
where the convergence is meant in the gap (norm) topology of the Lagrangian Grassmannian.

In \cite{HPWX20}, under condition (L2), the authors establish a fundamental relationship between the Morse index \(\iMor(L)\) and the Maslov index \(\iCLM(E^s(\tau),E^u(-\tau);\tau\in[0,+\infty))\), given by
\[
\iMor(L)=-\iCLM(E^s(\tau),E^u(-\tau);\tau\in[0,+\infty)).
\]
For n > 1, computing the Maslov index becomes challenging without additional knowledge of the n-1 solutions. To address this, \cite[Theorem 4.1]{BCJLM18} considers the second-order linear operator
\[
H = -\frac{\d^2}{\d t^2} + A(t), \quad \text{with } \dom H = W^{2,2}(\mathbb{R},\mathbb{R}^n).
\]
Under condition (L1), the authors employ the Evans function and Rouché's theorem to establish the result
\[
\iMor(H) = \sum_{\tau \in \mathbb{R}} \dim(E^u(\tau) \cap \Lambda_D),
\]
where \[
\Lambda_D=\{(u,0)^\top |u\in\R^n\}
\]is the horizontal (Dirichlet) Lagrangian subspace.

Inspired by this result, we adopt a different methodology to extend these results to general Sturm-Liouville operators.
\begin{thm}\label{thm:main result}
	Under the assumptions (L1) and (L2),  there exists $T_\infty\in \R$ such that, for all $T>T_\infty$,  we have:
	$$
		\iMor(L) = \iCLM(\Lambda_D,E^u(\tau);\tau\in(-\infty,T])=\sum_{\tau \in  \R} \dim(E^u(\tau) \cap \Lambda_D).
	$$
\end{thm}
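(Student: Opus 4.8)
The plan is to establish the two equalities separately. The rightmost identity $\iCLM(\Lambda_D,E^u(\tau);\tau\in(-\infty,T])=\sum_{\tau\in\R}\dim(E^u(\tau)\cap\Lambda_D)$ will follow from a crossing-form computation together with a finiteness argument, while the identification with $\iMor(L)$ will be obtained by transporting the relation $\iMor(L)=-\iCLM(E^s(\tau),E^u(-\tau);\tau\in[0,+\infty))$ of \cite{HPWX20} onto the pair $(\Lambda_D,E^u(\tau))$ through a homotopy of Lagrangian paths.

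First I would compute the crossing form of the Lagrangian path $\tau\mapsto E^u(\tau)$ relative to the fixed subspace $\Lambda_D$. Because $E^u(\tau)=\gamma_{\tau_0}(\tau)E^u(\tau_0)$, the path is generated by $JB(\tau)$, so at a crossing instant $\tau_0$ and for $v\in E^u(\tau_0)\cap\Lambda_D$ the crossing form equals $\omega(v,JB(\tau_0)v)=\inpro{v}{B(\tau_0)v}$. Writing $v=(u,0)^\top$ and inserting the block form of $B(t)$, all but the upper-left block drop out and the form reduces to $\inpro{u}{P^{-1}(\tau_0)u}$, which is strictly positive since $P(t)$ is positive definite and hence $P^{-1}(\tau_0)>0$. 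Thus every crossing is regular with positive definite crossing form, so crossings are isolated and each contributes its full dimension with a fixed sign; this yields $\iCLM(\Lambda_D,E^u(\tau);\tau\in(-\infty,T])=\sum_{\tau\leq T}\dim(E^u(\tau)\cap\Lambda_D)$. The path converges as $\tau\to-\infty$ to $V^+(JB(-\infty))$, and I would check (using (L2)) that this limit is transverse to $\Lambda_D$, so that the index over the half-line is well defined and finite; monotonicity then forces only finitely many crossings, and choosing $T_\infty$ beyond the last one gives, for every $T>T_\infty$, both the displayed identity and the equality of the partial sum with the full sum over $\R$.

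For the identification with the Morse index I would set up a Maslov box in the parameters $(\lambda,\tau)$. Introducing the spectral parameter through $B_\lambda(t)=B(t)+\lambda\,\diag(0,I_n)$ (corresponding to $L-\lambda$) and the associated unstable subspaces $E^u_\lambda(\tau)$, I would track the CLM index of $E^u_\lambda(\tau)$ against $\Lambda_D$ along the boundary of a rectangle $[\lambda_0,0]\times[\tau_-,T]$ with $\lambda_0\ll0$, $\tau_-\ll0$ and $T>T_\infty$. Homotopy invariance makes the total index around the boundary vanish; the bottom side $\lambda=0$ reproduces $\iCLM(\Lambda_D,E^u(\tau);\tau\in[\tau_-,T])$, while the side $\tau=T$ records the spectral flow of $\lambda\mapsto L-\lambda$ across $0$, hence $\iMor(L)$, once $T$ is large enough that the Dirichlet condition at $T$ and the decay condition $E^s(T)\to V^-(JB(+\infty))$ select the same count. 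The two remaining sides must be shown to be inert: the top side $\lambda=\lambda_0$ because $L-\lambda_0$ is positive (no negative spectrum and no crossings with $\Lambda_D$), and the left side $\tau=\tau_-$ because $E^u_\lambda(\tau_-)\to V^+(JB_\lambda(-\infty))$ stays transverse to $\Lambda_D$. Collecting the four contributions yields $\iMor(L)=\iCLM(\Lambda_D,E^u(\tau);\tau\in(-\infty,T])$.

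The main obstacle is this asymptotic and boundary bookkeeping. One must control the subspaces $E^u_\lambda(\tau)$ uniformly as $\tau\to\pm\infty$ and as $\lambda\to-\infty$, establish the transversality statements $V^+(JB_\lambda(\pm\infty))\cap\Lambda_D=\{0\}$ that render the lateral sides computable or inert, and verify that the side $\tau=T$ genuinely equals the full-line Morse index rather than a truncated count --- equivalently, that no crossing escapes to $\tau=+\infty$ once $T>T_\infty$. These facts should follow from condition (L2) via the hyperbolicity lemmas already invoked in the excerpt, but obtaining them with the uniformity required to close the box is the delicate part of the argument; the crossing-form positivity of the first step is what ultimately guarantees that the resulting index is the honest intersection count.
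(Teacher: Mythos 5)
Your first step coincides with the paper's closing argument: the crossing form of $\tau\mapsto E^u(\tau)$ relative to $\Lambda_D$ is $\Gamma(E^u(\tau),\Lambda_D;\tau_0)=\langle P^{-1}(\tau_0)v,v\rangle>0$, so all crossings are regular and positive, and transversality of $V^+(JB(-\infty))$ to $\Lambda_D$ is exactly Lemma \ref{lem:transversal_L_D}. For the identification with $\iMor(L)$, however, you propose the Maslov-box scheme of \cite{BCJLM18,HLS18} in the parameters $(\lambda,\tau)$ with $\lambda_0\ll 0$, which is a genuinely different route from the paper's (regularization $L\mapsto L_1=L+\epsilon$, the full-line theorem $\iMor(L_1)=-\iCLM(E_1^s(\tau),E_1^u(-\tau);\tau\in[0,+\infty))$ of \cite{HPWX20}, the homotopies of Lemma \ref{lem:Maslov equivalent}, a H\"ormander/triple-index computation, and a spectral-flow transfer back to $\lambda=0$). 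As it stands your plan has two genuine gaps, both sitting exactly where you yourself call the argument ``delicate.''

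First, finiteness of the total number of crossings and the existence of $T_\infty$. Monotonicity plus the transverse limit at $-\infty$ bounds the crossings only in $(-\infty,T]$ for each \emph{fixed} $T$; nothing you say prevents crossings from marching off to $+\infty$, so ``choosing $T_\infty$ beyond the last one'' is unjustified. You cannot repair this by convergence of $E^u(\tau)$ at $+\infty$: Lemma \ref{space convergence}(ii) gives $\gamma_{0}(\tau)W\to V^+(JB(+\infty))$ only for $W$ complementary to $E^s(0)$, and under (L1)--(L2) the operator $L$ may be degenerate, in which case $E^u(0)\cap E^s(0)\neq\{0\}$ and $E^u(\tau)$ need not converge to $V^+(JB(+\infty))$. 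This is precisely the content of the paper's Lemma \ref{lem:trunct Maslov=Maslov}: the H\"ormander/triple-index estimate $\iCLM(\Lambda_D,E^u_\lambda(\tau);\tau\in(-\infty,T])\leq\iCLM(E^s_\lambda(T),E^u_\lambda(\tau);\tau\in(-\infty,T])+n$, combined with \cite[Theorem 2]{CH07}, gives the bound $\iMor(L)+n$ on the number of crossings \emph{uniformly} in $T$ and $\lambda$, and only this produces $T_1$.

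Second, the side $\tau=T$ of your box counts eigenvalues in $(\lambda_0,0)$ of the \emph{truncated} Dirichlet operator on $(-\infty,T]$, i.e.\ the Morse index of $L_{T}$, not of $L$; your assertion that for large $T$ ``the Dirichlet condition at $T$ and the decay condition select the same count'' is the analytic heart of the theorem and does not follow from (L2) and hyperbolicity alone. In the paper this identification is assembled from nontrivial pieces: nondegeneracy of the shifted operators (Lemma \ref{lem:nondegenerate_L}) and $\iMor(L)=\iMor(L_1)$ (Lemma \ref{lem:morse 0= morse 1}); the conversion of the full-line index theorem into $\iCLM(E_1^s(T),E_1^u(\tau);\tau\in(-\infty,T])$ (Lemma \ref{lem:Maslov equivalent}); the replacement of $E_1^s(T)$ by $\Lambda_D$ via the limiting H\"ormander index $s(E_1^u(-\infty),E_1^u(T);\Lambda_D,E_1^s(T))\to\coiMor(M_1^+-N_1^-)-\coiMor(N_1^+-N_1^-)=n-n=0$, which crucially uses the definiteness of the frames $M_1^{\pm},N_1^{\pm}$ (Lemma \ref{lem:matrix definite}) and vanishes only as a \emph{difference} of two maximal coindices, not termwise; and finally the return to $\lambda=0$, where the possible degeneracy of $L$ (the bottom edge and corner of your box) is handled by choosing $T$ with $\Lambda_D\cap E^u_\lambda(T)=\{0\}$ for all $\lambda$ and invoking the vanishing spectral flow of the truncated family together with \cite[Theorem 2]{HP17}. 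Without supplying these inputs, or equivalents of them, your box does not close.
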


\begin{rem}
	Let $L: \mathbb{R}^n\times \mathbb{R}^n\rightarrow \mathbb{R}$ be a smooth autonomous Lagrangian function satisfying the following Legendre convexity condition:  
	
	\noindent  $L$ is $C^2$-convex, meaning that the quadratic form  
	\begin{align*}
	\left\|D_{v v}^2 L(q, v)\right\| \geq \ell_0 I>0, \quad \forall(q, v) \in \mathbb{R}^n\times \mathbb{R}^n.
	\end{align*}
	
	In what follows, we denote by $u^{-}, u^{+} \in \mathbb{R}^n$ two rest points of the Lagrangian vector field $\nabla L$. That is, they satisfy  
	\[
	\nabla L\left( u^{ \pm}, 0\right)=0.
	\]
	
	A heteroclinic orbit $u^*$ asymptotic to $u^{ \pm}$ is a $C^2$-solution of the following boundary value problem:  
	\begin{align}\label{eq:lagrangian sys.}
	\begin{cases}
	\frac{\d}{\d t} \partial_v L(t, u(t), \dot{u}(t))=\partial_q L(t, u(t), \dot{u}(t)), \quad t \in \mathbb{R}, \\
	\lim \limits_{t \rightarrow-\infty} u(t)=u^{-}, \quad \lim \limits_{t \rightarrow+\infty} u(t)=u^{+}.
	\end{cases}
	\end{align}
	
	By linearizing \eqref{eq:lagrangian sys.} along $u^*$, we obtain the following Sturm-Liouville operator:
	\[
	Lw := -\frac{\d}{\d t}(P(t) \dot{w}(t) + Q(t) w(t)) + Q^\top(t) \dot{w}(t) + R(t) w(t), \quad \text{with } \dom(L) = W^{2,2}(\mathbb{R}, \mathbb{R}^n),
	\]
	where  
	\[
	P(t):=\partial_{v v} L( u(t), \dot{u}(t)), \quad Q(t):=\partial_{u v} L(u(t), \dot{u}(t)), \quad R(t):=\partial_{u u} L(u(t), \dot{u}(t)).
	\]
	
	Since \eqref{eq:lagrangian sys.} is autonomous, it follows that $\dot{u}^*(t)\in \ker (L)$. Moreover, the vector  
	\[
	(P(\tau)\ddot{u}^*(\tau)+Q(\tau)\dot{u}^*(\tau),\dot{u}^*(\tau))^\top \in E^u(\tau).
	\]
	By applying Theorem \ref{thm:main result}, we conclude that the number of critical points of $u^*$ (i.e., points where $\dot{u}^*(t) = 0$) serves as a lower bound for the Morse index of the operator $L$.
\end{rem}

\section{The Proof of Theorem \ref{thm:main result}}

In this section, we firstly recall known results that provide sufficient conditions for the hyperbolicity of the Hamiltonian matrix $J B$, given by
\begin{align}\label{eq:hamiltonian_matrix}
B=\begin{pmatrix}
P^{-1} & -P^{-1} Q \\
-Q^{\top} P^{-1} & Q^{\top} P^{-1} Q - R
\end{pmatrix},
\end{align}
based on the non-vanishing determinant of a suitably chosen matrix.

The following results, which are well established in the literature\cite{HPWX20}, will be useful in our analysis.

\begin{lem}\cite[Corollary C.2]{HPWX20}\label{lem:hyperbolic_matrix}
If the matrix
$\begin{pmatrix} P & Q \\ Q^{\top} & R \end{pmatrix}$
is positive definite, then the Hamiltonian matrix $J B$ is hyperbolic.
\end{lem}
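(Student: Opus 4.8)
Since $P$ and $R$ are symmetric and the two off-diagonal blocks of $B$ are mutual transposes, one checks at once that $B=B^{\top}$; consequently $JB$ is infinitesimally symplectic (Hamiltonian) and its spectrum is symmetric under $\lambda\mapsto-\lambda$ and under complex conjugation. By the definition of hyperbolicity used here, it therefore suffices to show that $JB$ has no eigenvalue on the imaginary axis. The plan is to argue by contradiction: assume $JBv=i\beta v$ for some $\beta\in\R$ and some $0\neq v=(a,b)^{\top}\in\C^{2n}$ with $a,b\in\C^{n}$, and extract a contradiction from the positivity of $S:=\begin{pmatrix} P & Q \\ Q^{\top} & R\end{pmatrix}$.

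First I would rewrite the eigenvalue equation as $Bv=-i\beta Jv$ (using $J^{-1}=-J$) and expand it blockwise. The top block should read $P^{-1}(a-Qb)=i\beta b$, which solves for $a$ in terms of $b$ as $a=(Q+i\beta P)b$. Substituting this into the bottom block and cancelling the two $Q^{\top}P^{-1}Q$ contributions, everything should collapse to the single relation
\[
\beta^{2} Pb + Rb - i\beta\,(Q-Q^{\top})b = 0 .
\]

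The key step is then to pair this identity with $b$ in the Hermitian inner product. Because $P$ and $R$ are symmetric and $Q-Q^{\top}$ is skew, each term is real, and a short computation should identify the outcome as the Hermitian quadratic form of $S$ evaluated at $\zeta:=(i\beta b,\,b)^{\top}$, namely
\[
\zeta^{*} S \zeta = \beta^{2}\, b^{*} P b + b^{*} R b - i\beta\, b^{*}(Q-Q^{\top})b = 0 .
\]
Since $S$ is positive definite, this forces $\zeta=0$, hence $b=0$ and then $a=(Q+i\beta P)b=0$, contradicting $v\neq0$. The degenerate value $\beta=0$ is covered by the same line, with $\zeta=(0,b)^{\top}$ and $\zeta^{*}S\zeta=b^{*}Rb$, so that positive definiteness again yields $b=0$ and $a=Qb=0$.

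I expect the main obstacle to be essentially bookkeeping rather than conceptual: verifying that the block substitution reproduces the $S$-quadratic form exactly, with the correct treatment of the skew part $Q-Q^{\top}$ and a consistent sign convention for $J$, and making sure the case $\beta=0$ does not slip out of the argument. Once the identity $\zeta^{*}S\zeta=0$ is established, the positive definiteness of $S$ closes the proof immediately.
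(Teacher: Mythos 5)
Your proposal is correct, and every step checks out: $B=B^{\top}$ holds blockwise; the top block of $JBv=i\beta v$ indeed gives $a=(Q+i\beta P)b$; substituting into the bottom block, the two $Q^{\top}P^{-1}Q$ terms cancel exactly as you predict, leaving $\beta^{2}Pb+Rb-i\beta\,(Q-Q^{\top})b=0$; and pairing with $b$ reproduces precisely the Hermitian form of $S$ at $\zeta=(i\beta b,\,b)^{\top}$, since $u^{*}Pu=\beta^{2}b^{*}Pb$ and $u^{*}Qw+w^{*}Q^{\top}u=-i\beta\,b^{*}(Q-Q^{\top})b$ for $u=i\beta b$, $w=b$. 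Two small points worth making explicit: positive definiteness of the \emph{real symmetric} $S$ does extend to complex vectors (write $\zeta=x+iy$ and use $x^{\top}Sy=y^{\top}Sx$ to get $\zeta^{*}S\zeta=x^{\top}Sx+y^{\top}Sy>0$ for $\zeta\neq0$), and $\zeta=(i\beta b,b)^{\top}=0$ forces $b=0$ for every $\beta$, so your separate treatment of $\beta=0$ is safe but not actually needed. The sign convention for $J$ only flips $\beta\mapsto-\beta$, which is immaterial. Note that there is no internal proof to compare against: the paper states this lemma by citation to \cite[Corollary C.2]{HPWX20} without reproducing an argument, so your computation supplies a self-contained elementary proof of a result the paper merely quotes; it also implicitly re-derives the needed fact that $P$ is invertible (as a principal block of the positive definite $S$), so the matrix $B$ is well defined under the hypothesis.
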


\begin{lem}\cite[Lemma C.6]{HPWX20}\label{lem:transversal_L_D}
If the matrix
$\begin{pmatrix} P & Q \\ Q^{\top} & R \end{pmatrix}$
is positive definite, then we have that $V^{\pm}\left(J B \right) \cap L_D = \{0\}.$
\end{lem}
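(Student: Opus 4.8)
The plan is to exploit a flow-invariant quadratic ``energy'' for the constant-coefficient Hamiltonian system $\dot z=JBz$ and to show that this energy is definite when restricted to the Dirichlet subspace $L_D$. First I would record that the matrix $B$ in \eqref{eq:hamiltonian_matrix} is symmetric: $(P^{-1})^\top=P^{-1}$, the off-diagonal blocks $-P^{-1}Q$ and $-Q^\top P^{-1}$ are mutually transpose, and $Q^\top P^{-1}Q-R$ is symmetric. From $B^\top=B$ and $J^\top=-J$ the matrix $BJB$ is antisymmetric, since $(BJB)^\top=B^\top J^\top B^\top=-BJB$. Hence the quadratic form $\Phi(z):=\inpro{Bz}{z}$ satisfies $\tfrac{\d}{\d t}\Phi(\exp(tJB)z_0)=2\inpro{Bz}{JBz}=2\,z^\top BJBz=0$, so $\Phi$ is constant along every trajectory.

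Next I would use the decay that characterizes $V^{\pm}(JB)$. If $v\in V^-(JB)$, then by definition $\exp(tJB)v\to 0$ as $t\to+\infty$, and since $\Phi$ is continuous and constant on the orbit, $\Phi(v)=\lim_{t\to+\infty}\Phi(\exp(tJB)v)=\Phi(0)=0$; the identical argument with the limit $t\to-\infty$ gives $\Phi(v)=0$ for every $v\in V^+(JB)$. Note that even the generalized eigenvectors occupying Jordan blocks still decay (polynomial times exponential), so this step is unaffected by the presence of non-diagonalizable eigenvalues.

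Finally I would restrict $\Phi$ to $L_D$. For $v=(u,0)^\top$ one computes $Bv=(P^{-1}u,\,-Q^\top P^{-1}u)^\top$ and therefore $\Phi(v)=\inpro{P^{-1}u}{u}$. Since $\begin{pmatrix}P&Q\\Q^\top&R\end{pmatrix}$ is positive definite, its upper-left block $P$ (a principal submatrix), and hence $P^{-1}$, is positive definite, so $\Phi(v)=0$ forces $u=0$, i.e. $v=0$. Combining this with the previous step yields $V^{\pm}(JB)\cap L_D=\{0\}$, as claimed.

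The computational content here is light; the delicate points are conceptual. The single ingredient that actually produces the transversality is $P>0$, which makes $\Phi$ a definite form on $L_D$, together with the defining decay of elements of $V^{\pm}(JB)$. The full positive definiteness of the block matrix enters separately, through Lemma \ref{lem:hyperbolic_matrix}, to guarantee that $JB$ is hyperbolic and hence that $V^{\pm}(JB)$ are genuinely the stable/unstable subspaces (which together span $\R^{2n}$ and are Lagrangian) relevant to the rest of the argument. The main thing to get right is the recognition that the symmetry of $B$ produces the invariant form $\Phi$ and that its restriction to $L_D$ collapses to the definite form $\inpro{P^{-1}u}{u}$; everything else is a limiting argument.
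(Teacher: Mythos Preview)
Your argument is correct. The invariance of the quadratic form $\Phi(z)=\inpro{Bz}{z}$ under the flow $\exp(tJB)$ follows from the antisymmetry of $BJB$, the vanishing of $\Phi$ on $V^{\pm}(JB)$ then follows from the defining decay, and the restriction $\Phi|_{L_D}(u,0)=\inpro{P^{-1}u}{u}$ is positive definite because $P$ is a principal submatrix of a positive definite matrix. Each step checks out, including the remark that generalized eigenvectors still decay, so Jordan blocks cause no trouble.

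There is no proof in the present paper to compare against: the lemma is quoted from \cite[Lemma~C.6]{HPWX20} and used here without argument. Your energy-form approach is self-contained and relies only on $P>0$ for the transversality itself; as you note, the full positivity of the block matrix is what supplies hyperbolicity via Lemma~\ref{lem:hyperbolic_matrix}, which is needed elsewhere but not for this particular conclusion.
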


Fixed \( \xi \geq 0 \), we define the operator \( L_\xi := L + \xi \Id \), where \( \dom(L_\xi) := W^{2,2}(\R, \R^n) \).

Under condition (L2), a straightforward calculation shows that the matrices
\begin{align*}
\begin{pmatrix} P_- & Q_- \\ Q_-^{\top} & R_- + \xi \Id \end{pmatrix} \quad \text{and} \quad 
\begin{pmatrix} P_+ & Q_+ \\ Q_+^{\top} & R_+ + \xi \Id \end{pmatrix}
\end{align*}
are both positive definite for all \( \xi \geq 0\). Consequently, by Lemma \ref{lem:hyperbolic_matrix} and Proposition \ref{pro:fredholm}, we conclude that \( L_\xi \) is a Fredholm operator.

\begin{lem}\label{lem:nondegenerate_L}
	Under the assumptions (L1) and (L2), the operator \( L_\xi \) is non-degenerate for every \( \xi \geq \frac{8C_2^2}{C_1} + C_3 \), where \( C_1, C_2 \), and \( C_3 \) are as defined in (L1).
	\end{lem}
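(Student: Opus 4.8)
The plan is to prove directly that $\ker(L_\xi)=\{0\}$ by showing that the quadratic form associated with $L_\xi$ is coercive on $W^{1,2}(\R,\R^n)$ once $\xi$ is large enough. First I would pair $L_\xi w$ with $w$ in $L^2$, for $w\in\dom(L_\xi)=W^{2,2}(\R,\R^n)$, and integrate the top-order term by parts, obtaining
\[
\inpro{L_\xi w}{w}_{L^2}=\int_\R\Big[\inpro{P(t)\dot w}{\dot w}+2\inpro{Q(t)w}{\dot w}+\inpro{(R(t)+\xi\Id)w}{w}\Big]\,\d t,
\]
where the two first-order terms combine (using $\inpro{Q^\top\dot w}{w}=\inpro{Qw}{\dot w}$) and the boundary contribution $\inpro{P\dot w+Qw}{w}$ at $\pm\infty$ drops out. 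Justifying this vanishing is the first thing to check: it follows from the Sobolev embedding $W^{2,2}(\R,\R^n)\hookrightarrow C^1(\R,\R^n)$, which forces $w,\dot w\to 0$ at $\pm\infty$, together with the boundedness of $P,Q$ from (L1).

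Next I would estimate the three terms using (L1). Reading the hypothesis on $P$ as a uniform ellipticity bound $\inpro{P(t)v}{v}\geq C_1\abs{v}^2$ gives $\inpro{P\dot w}{\dot w}\geq C_1\abs{\dot w}^2$; the bound $\norm{R(t)}\leq C_3$ gives $\inpro{(R+\xi\Id)w}{w}\geq(\xi-C_3)\abs{w}^2$; and Cauchy--Schwarz with $\norm{Q(t)}\leq C_2$ controls the cross term by $2\abs{\inpro{Qw}{\dot w}}\leq 2C_2\abs{w}\abs{\dot w}$. A weighted Young inequality $2C_2\abs{w}\abs{\dot w}\leq\eps\abs{\dot w}^2+(C_2^2/\eps)\abs{w}^2$ then absorbs the cross term into the other two; taking for instance $\eps=C_1/2$ yields the pointwise lower bound $\tfrac{C_1}{2}\abs{\dot w}^2+(\xi-C_3-2C_2^2/C_1)\abs{w}^2$.

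Finally I would invoke the hypothesis $\xi\geq 8C_2^2/C_1+C_3$, which makes the coefficient of $\abs{w}^2$ at least $6C_2^2/C_1>0$, so that
\[
\inpro{L_\xi w}{w}_{L^2}\geq\frac{C_1}{2}\norm{\dot w}_{L^2}^2+\frac{6C_2^2}{C_1}\norm{w}_{L^2}^2 .
\]
Coercivity follows, and if $w\in\ker(L_\xi)$ then the left-hand side vanishes, forcing $w=0$; hence $L_\xi$ is non-degenerate. I do not expect a single hard step here: the argument is a standard coercivity estimate. The only points needing care are the two flagged above --- the vanishing of the boundary terms in the integration by parts, and the correct reading of the $P$-bound in (L1) as a lower ellipticity bound rather than a mere operator-norm bound, since the latter alone would not yield $\inpro{P\dot w}{\dot w}\geq C_1\abs{\dot w}^2$. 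The explicit threshold $8C_2^2/C_1$ is not optimal; it is simply a convenient value leaving a definite positive margin on the $\abs{w}^2$ coefficient for any reasonable choice of the Young parameter.
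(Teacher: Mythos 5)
Your proposal is correct and takes essentially the same route as the paper: pair $L_\xi w$ with $w$, integrate by parts, and absorb the cross term with a weighted Young inequality to get coercivity --- the only difference is the choice of Young parameter (the paper takes $\delta = C_1/(4C_2)$, you take $\eps = C_1/2$, which yields a strictly positive margin $6C_2^2/C_1$ on the $\abs{w}^2$ coefficient where the paper settles for a nonnegative one plus the positive $\abs{\dot w}^2$ term). The two points you flag are exactly the readings the paper's computation silently relies on: the boundary terms vanish since $w,\dot w\in W^{1,2}(\R,\R^n)$ decay at infinity, and the hypothesis $\norm{P(t)}\geq C_1$ must be interpreted as the ellipticity bound $\inpro{P(t)v}{v}\geq C_1\abs{v}^2$ for the estimate $\int\inpro{P\dot w}{\dot w}\,\d t\geq C_1\norm{\dot w}_{L^2}^2$ to hold.
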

	
	\begin{proof}
		Suppose that \( w \in \ker(L_\xi) \). Then, applying the Cauchy-Schwarz inequality, we obtain
		\begin{align}\label{eq:nondegenerate_L}
			\begin{split}
				0 &= \int_{-\infty}^{+\infty}\inpro{-\frac{\d}{\d t}(P(t) \dot{w}(t) + Q^\top(t) w(t)) + Q(t) \dot{w}(t) + R(t) w(t) + \xi w}{w} \d t \\
				&= \int_{-\infty}^{+\infty}\inpro{P(t) \dot{w}(t)}{\dot{w}(t)} \d t + \int_{-\infty}^{+\infty}\inpro{Q(t) \dot{w}(t)}{w(t)} \d t \\
				&\quad + \int_{-\infty}^{+\infty}\inpro{Q^{\top}(t) w(t)}{\dot{w}(t)} \d t + \int_{-\infty}^{+\infty}\inpro{R(t) w(t) + \xi w(t)}{w(t)} \d t \\
				&\geq C_1 \int_{-\infty}^{+\infty} |\dot{w}(t)|^2 \d t - 2 C_2 \int_{-\infty}^{+\infty} \left( \delta |\dot{w}(t)|^2 + \frac{1}{\delta} |w(t)|^2 \right) \d t + (\xi - C_3) \int_{-\infty}^{+\infty} |w(t)|^2 \d t \\
				&= \left( C_1 - 2\delta C_2 \right) \int_{-\infty}^{+\infty} |\dot{w}(t)|^2 \d t + \left( \xi - C_3 - \frac{2}{\delta} C_2 \right) \int_{-\infty}^{+\infty} |w(t)|^2 \d t.
				\end{split}
			\end{align}
		Setting \( \delta := \frac{C_1}{4C_2} \), it follows from \eqref{eq:nondegenerate_L} that if \( \xi \geq \frac{8C_2^2}{C_1} + C_3 \), then \( L_\xi \) is non-degenerate.
		\end{proof}

		Since \( L \) is a Fredholm operator, we define a family of operators 
		\begin{align}\label{eq:operator path}
			L_\lambda = L + \lambda \epsilon \Id 
		\end{align}
		where \( \epsilon > 0 \) and \( \lambda \in [0,1] \), with \( \epsilon \) being small enough to satisfy the following condition:
		
		\begin{itemize}
			\item[(L3)] \( L_\lambda \) is non-degenerate for all \( \lambda \in (0,1] \).
		\end{itemize}

		\begin{lem}\label{lem:morse 0= morse 1}
			Under the assumptions (L1)-(L3), for the operator path $L_\lambda $ in \eqref{eq:operator path}, we have that 
			\[
			\iMor(L) = \iMor(L_1).
			\]
		\end{lem}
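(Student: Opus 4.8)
The plan is to exploit the fact that the path $L_\lambda = L + \lambda\epsilon\Id$ is a \emph{rigid affine shift} of $L$, so that its spectrum is obtained from that of $L$ by translation, and then to read condition (L3) as the assertion that no eigenvalue of $L$ lies in the interval swept out by the shift. First I would record that the two Morse indices in the statement are well-defined finite integers: by the coercivity estimate already used in the proof of Lemma \ref{lem:nondegenerate_L}, $L$ is bounded below, and under (L2) together with Lemma \ref{lem:hyperbolic_matrix} and Proposition \ref{pro:fredholm} the essential spectrum of every $L_\lambda$ is contained in $(0,+\infty)$ and stays bounded away from $0$. Hence the negative part of $\sigma(L)$ consists of finitely many isolated eigenvalues of finite multiplicity, and $\iMor(L)$, $\iMor(L_1)$ count exactly these with multiplicity.

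The key observation is that $L_\lambda - z\Id = L - (z-\lambda\epsilon)\Id$ for every $z$, so that
$$\sigma(L_\lambda) = \sigma(L) + \lambda\epsilon, \qquad \lambda\in[0,1],$$
with unchanged eigenspaces. Thus an eigenvalue $\mu$ of $L$ contributes a negative eigenvalue of $L_\lambda$ precisely when $\mu < -\lambda\epsilon$, whence
$$\iMor(L_\lambda) = \#\{\mu\in\sigma(L) : \mu < -\lambda\epsilon\}\quad\text{(with multiplicity)}.$$
In particular $\iMor(L)=\#\{\mu<0\}$ and $\iMor(L_1)=\#\{\mu<-\epsilon\}$, and these two numbers differ exactly by the number of eigenvalues of $L$ lying in the half-open interval $[-\epsilon,0)$.

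It remains to show that this last count vanishes, and this is where (L3) enters. Since each $L_\lambda$ is self-adjoint and its essential spectrum avoids $0$, nondegeneracy of $L_\lambda$ is equivalent to $0\notin\sigma(L_\lambda)$, i.e.\ to $-\lambda\epsilon\notin\sigma(L)$. As $\lambda$ ranges over $(0,1]$ the quantity $-\lambda\epsilon$ ranges over the whole interval $[-\epsilon,0)$, so (L3) is equivalent to $\sigma(L)\cap[-\epsilon,0)=\varnothing$. Therefore the difference computed above is zero and $\iMor(L)=\iMor(L_1)$.

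I expect the only point needing genuine care to be the finiteness and discreteness of the spectrum near $0$ — namely, that the counting functions are finite and that the essential spectrum remains off $[-\epsilon,0]$ — which is precisely what the lower bound behind Lemma \ref{lem:nondegenerate_L} and the Fredholm property of Proposition \ref{pro:fredholm} provide. One could alternatively phrase the argument through the spectral flow of $\lambda\mapsto L_\lambda$: the path is monotone increasing, so it can cross $0$ only where $L_\lambda$ is degenerate, and (L3) forbids this on $(0,1]$, while the kernel of $L$ at $\lambda=0$ moves strictly into the positive axis for small $\lambda>0$, leaving the count of strictly negative eigenvalues unchanged. The translation argument is more direct, however, since it sidesteps any discussion of sign conventions at the possibly degenerate endpoint $\lambda=0$.
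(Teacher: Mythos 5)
Your proof is correct, but it takes a genuinely different route from the paper's. The paper argues via spectral flow: it embeds the path in the two-parameter family $L_{\xi,\lambda}=L+\xi\Id+\lambda\epsilon\Id$, $(\xi,\lambda)\in[0,\hat{\xi}]\times[0,1]$ with $\hat{\xi}>\frac{8C_2^2}{C_1}+C_3$, and applies homotopy invariance of the spectral flow around the boundary of this rectangle, so that $\spfl{L_{\xi,0};\xi\in[0,\hat{\xi}]}+\spfl{L_{\hat{\xi},\lambda};\lambda\in[0,1]}=\spfl{L_{\xi,1};\xi\in[0,\hat{\xi}]}+\spfl{L_{0,\lambda};\lambda\in[0,1]}$; the two horizontal flows are identified with $\iMor(L)$ and $\iMor(L_1)$ through the crossing-form formulas \eqref{eq:def of cross form in sf} and \eqref{eq:compute sf by crossing form} together with Lemma \ref{lem:nondegenerate_L}, while the two vertical flows vanish by Lemma \ref{lem:nondegenerate_L} and (L3). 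You dispense with the spectral-flow formalism entirely by noting that the path is a rigid scalar shift, $\sigma(L_\lambda)=\sigma(L)+\lambda\epsilon$ with unchanged eigenspaces, and that (L3) translates into $\sigma(L)\cap[-\epsilon,0)=\varnothing$, after which equality of the indices is a direct count. The two supporting facts you invoke are exactly those the paper establishes just before the lemma: Fredholmness of $L+\xi\Id$ for every $\xi\geq0$ (from (L2), Lemma \ref{lem:hyperbolic_matrix} and Proposition \ref{pro:fredholm}), which keeps $\sigmaess(L_\lambda)$ off $(-\infty,0]$ and makes nondegeneracy of $L_\lambda$ equivalent to $0\notin\sigma(L_\lambda)$, and the coercivity estimate behind Lemma \ref{lem:nondegenerate_L}, which bounds $L$ from below so that the negative spectrum is a finite set of eigenvalues of finite multiplicity. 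Your argument is shorter and more elementary, and, as you observe, it sidesteps the endpoint sign conventions at the possibly degenerate point $\lambda=0$, which the paper's convention \eqref{eq:compute sf by crossing form} handles through the positivity of the crossing form $\epsilon\Id$. What it does not buy is generality: the translation identity holds only because the perturbation is a scalar multiple of the identity, whereas the paper's rectangle homotopy would survive a perturbation in a general positive-definite direction, where the spectrum deforms rather than translates, and it exercises the spectral-flow identities that the remainder of the paper relies on in any case.
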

		
		\begin{proof}
			Let \( \hat{\xi} > \frac{8C_2^2}{C_1} + C_3 \). We construct the following homotopy operator path:
			\[
			L_{\xi,\lambda} = L + \xi \Id + \lambda \epsilon \Id : (\xi, \lambda) \in [0, \hat{\xi}] \times [0, 1],
			\]
			By the homotopy invariance of the spectral flow, we have:
			\begin{align}\label{eq:morse equivalent}
				\spfl{L_{\xi,0};\xi\in[0,\hat\xi]} + \spfl{L_{\hat\xi,\lambda};\lambda\in[0,1]} = \spfl{L_{\xi,1};\xi\in[0,\hat\xi]} + \spfl{L_{0,\lambda};\lambda\in[0,1]}.
			\end{align}
			By \eqref{eq:compute sf by crossing form}, \eqref{eq:def of cross form in sf}, and Lemma \ref{lem:nondegenerate_L}, we have that:
			\[
				\spfl{L_{\xi,0};\xi\in[0,\hat\xi]} = \iMor(L),
			\text{ and }		\spfl{L_{\xi,1};\xi\in[0,\hat\xi]} = \iMor(L_1).
			\]
			By \eqref{eq:compute sf by crossing form}, \eqref{eq:def of cross form in sf}, and Lemma \ref{lem:nondegenerate_L} along with condition (L3), we have:
			\[
				\spfl{L_{\hat\xi,\lambda};\lambda\in[0,1]} = \spfl{L_{0,\lambda};\lambda\in[0,1]} = 0.
			\]
			Therefore, by the above discussion and \eqref{eq:morse equivalent}, we conclude the proof.
		\end{proof}

By applying a symplectic change of coordinates, the family of equations \( L_\lambda u = 0 \) (\( \lambda \in [0,1] \)) can be rewritten as the following family of Hamiltonian systems:
\begin{align}\label{eq:family h.s.}
	\dot{z} = J B_\lambda(t) z \quad \text{where} \quad B_\lambda(t) := \left(\begin{array}{cc}
	P^{-1}(t) & -P^{-1}(t) Q(t) \\
	-Q^{\top}(t) P^{-1}(t) & Q^{\top}(t) P^{-1}(t) Q(t) - R(t) - \lambda\epsilon \Id
	\end{array}\right).
\end{align}

Given condition (L1), the limit matrices \( \lim\limits_{t \to +\infty} B_\lambda(t) := B_\lambda(+\infty) \) and \( \lim\limits_{t \to -\infty} B_\lambda(t) := B_\lambda(-\infty) \) exist and are well-defined.

Similarly, in  \eqref{eq:invariance space}, we denote \( E_\lambda^s(\tau) \) and \( E_\lambda^u(\tau) \) as the stable and unstable spaces for the linear Hamiltonian systems \eqref{eq:family h.s.}, respectively. Moreover, for simplicity, we omit the subscript $0$ when $\lambda=0$.

Under condition (L2), we have that the matrices
\[
\begin{pmatrix}
	P_- & Q_- \\
	Q_-^{\top} & R_- + \epsilon\lambda \Id
\end{pmatrix}
\quad \text{and} \quad
\begin{pmatrix}
	P_+ & Q_+ \\
	Q_+^{\top} & R_+ + \epsilon \lambda\Id
\end{pmatrix}
\]
are both positive definite for all \( \lambda \in [0,1] \). By Lemma \ref{lem:hyperbolic_matrix}, we conclude that \( J B_\lambda(\pm\infty) \) are both hyperbolic for all \( \lambda \in [0,1] \). Therefore, \( E_\lambda^{s/u}(\tau) \in \Lag{n} \) for all \( \tau \in \mathbb{R} \).

\begin{lem}\cite[Theorem 2.1]{AM03}\label{space convergence}
	Under Condition (L2), for each \( \lambda \in [0,1] \) the following holds:
	\begin{itemize}
		\item[(i)] The stable and unstable subspaces satisfy
			$$
			\lim_{\tau \to +\infty} E_\lambda^s(\tau) = V^-(JB_\lambda(+\infty)) \quad \text{and} \quad \lim_{\tau \to -\infty} E_\lambda^u(\tau) = V^+(JB_\lambda(-\infty))
			$$
			in the gap metric topology of \( \Lag{n} \).
			
		\item[(ii)] For any complementary subspace \( W \subset \R^{2n} \) to \( E_\lambda^s(\tau) \) (resp. \( E_\lambda^u(\tau) \)):
					$$
					\gamma_{\tau,\lambda}(\sigma)W \to V^{+}(J B_\lambda(+\infty)) \quad (\text{resp.} \ V^{-}(J B_\lambda(-\infty))),
					$$
					where \( \gamma_{\tau,\lambda}(\sigma) \) denotes the fundamental matrix solution for the linear Hamiltonian system \eqref{eq:family h.s.}.
			\end{itemize}
\end{lem}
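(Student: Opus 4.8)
The plan is to prove the convergence in (i) for $E^s_\lambda(\tau)$ as $\tau \to +\infty$; the statement for $E^u_\lambda(\tau)$ as $\tau \to -\infty$, and likewise both cases of (ii), follow by the time-reversal substitution $t \mapsto -t$, which interchanges stable with unstable and $+\infty$ with $-\infty$, so no separate argument is needed. Throughout I abbreviate $A_\infty := JB_\lambda(+\infty)$ and set $C_\lambda(t) := JB_\lambda(t) - A_\infty$, so that (L1) yields $\norm{C_\lambda(t)} \to 0$ as $t \to +\infty$. By Lemma \ref{lem:hyperbolic_matrix} the matrix $A_\infty$ is hyperbolic; I denote by $\Pi^-, \Pi^+$ the spectral projections onto its stable and unstable subspaces $V^-(A_\infty), V^+(A_\infty)$, so that there are constants $K \geq 1$, $\alpha > 0$ with $\norm{e^{tA_\infty}\Pi^-} \leq K e^{-\alpha t}$ for $t \geq 0$ and $\norm{e^{tA_\infty}\Pi^+} \leq K e^{\alpha t}$ for $t \leq 0$. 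Since the dichotomy data depend continuously on the hyperbolic matrices $JB_\lambda(\pm\infty)$ and $[0,1]$ is compact, the constants $K$ and $\alpha$ may be chosen uniform in $\lambda$.

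For (i) I would use the Lyapunov–Perron representation of the decaying solutions on $[\tau,+\infty)$. A solution of $\dot z = JB_\lambda(t)z$ decays on $[\tau,+\infty)$ if and only if, writing $\xi^- := \Pi^- z(\tau) \in V^-(A_\infty)$, it solves
\begin{align}\label{eq:LP}
z(t) = e^{(t-\tau)A_\infty}\xi^- + \int_\tau^t e^{(t-s)A_\infty}\Pi^- C_\lambda(s) z(s)\,\d s - \int_t^{+\infty} e^{(t-s)A_\infty}\Pi^+ C_\lambda(s) z(s)\,\d s .
\end{align}
Fixing $\tau_0$ so large that $\kappa := \sup_{s \geq \tau_0}\norm{C_\lambda(s)}$ makes the right-hand side of \eqref{eq:LP} a contraction on the space of continuous functions on $[\tau,+\infty)$ decaying like $e^{-\alpha(t-\tau)/2}$, one obtains for each $\tau \geq \tau_0$ and $\xi^- \in V^-(A_\infty)$ a unique fixed point $z(\,\cdot\,;\tau,\xi^-)$, linear in $\xi^-$, and $E^s_\lambda(\tau) = \{ z(\tau;\tau,\xi^-) : \xi^- \in V^-(A_\infty)\}$. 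Evaluating \eqref{eq:LP} at $t = \tau$ then gives
\begin{align}\label{eq:graph}
z(\tau;\tau,\xi^-) = \xi^- - \int_\tau^{+\infty} e^{(\tau-s)A_\infty}\Pi^+ C_\lambda(s)z(s;\tau,\xi^-)\,\d s =: \xi^- + G_\lambda(\tau)\xi^- ,
\end{align}
where $G_\lambda(\tau): V^-(A_\infty) \to V^+(A_\infty)$ is linear and, by the dichotomy bounds together with the contraction estimate on $z(\,\cdot\,;\tau,\xi^-)$, satisfies $\norm{G_\lambda(\tau)} \leq C\,\sup_{s \geq \tau}\norm{C_\lambda(s)}$ with $C$ independent of $\tau \geq \tau_0$ and of $\lambda$. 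Thus $E^s_\lambda(\tau)$ is the graph of $G_\lambda(\tau)$ over $V^-(A_\infty)$, and since $\norm{G_\lambda(\tau)} \to 0$ as $\tau \to +\infty$, the graph converges to $V^-(A_\infty)$ in the gap topology of $\Lag{n}$, which is (i).

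For (ii) I would track the forward flow of a plane transverse to the stable direction using the same dichotomy. If $W$ is complementary to $E^s_\lambda(\tau)$, then $W$ is transverse to the stable subspace, so along the flow the $V^-(A_\infty)$-component of any vector is exponentially contracted relative to its $V^+(A_\infty)$-component; consequently, for $\sigma$ large, $\gamma_{\tau,\lambda}(\sigma)W$ is the graph of a linear map $V^+(A_\infty) \to V^-(A_\infty)$ whose norm tends to $0$ as $\sigma \to +\infty$. Equivalently, one may run the Lyapunov–Perron construction in backward time to represent the unstable subspace of the limit system as a graph and apply it to the shifted coefficient $JB_\lambda(\cdot + \sigma)$; either route yields $\gamma_{\tau,\lambda}(\sigma)W \to V^+(A_\infty)$ in the gap topology.

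The main obstacle is the quantitative core of (i): proving the contraction for \eqref{eq:LP} and the decay bound $\norm{G_\lambda(\tau)} \le C\,\sup_{s\ge\tau}\norm{C_\lambda(s)}$, while ensuring that all constants ($K$, $\alpha$, $\tau_0$, $C$) are uniform in $\lambda \in [0,1]$ via compactness and the continuous dependence of the spectral projections of $JB_\lambda(\pm\infty)$ on $\lambda$. Part (ii) is slightly more delicate than (i), since it concerns the asymptotic \emph{direction} of an evolving subspace rather than a fixed dichotomy subspace, so extra care is needed to control the transverse component uniformly.
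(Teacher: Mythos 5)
Your attempt is correct in substance, but note the key structural fact: the paper does \emph{not} prove this lemma at all — it is imported verbatim, as the citation indicates, from Abbondandolo--Majer \cite[Theorem 2.1]{AM03}, where it is established in far greater generality (paths of asymptotically hyperbolic ordinary differential operators on Hilbert spaces, with Fredholm-pair conclusions) via exponential dichotomy theory. Your blind proof therefore supplies a self-contained finite-dimensional substitute: the classical Lyapunov--Perron route, representing $E^s_\lambda(\tau)$ as the graph of $G_\lambda(\tau)\colon V^-(JB_\lambda(+\infty))\to V^+(JB_\lambda(+\infty))$ with $\norm{G_\lambda(\tau)}\le C\,\sup_{s\ge\tau}\norm{C_\lambda(s)}\to 0$, whence gap convergence. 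This is the same circle of ideas underlying the cited theorem, just specialized to $\R^{2n}$; your version buys elementarity and explicit constants, while the citation buys generality and lets the paper use the result as a black box. One simplification you missed: since the $\lambda\epsilon \Id$ term sits in the lower-right block of both $B_\lambda(t)$ and $B_\lambda(\pm\infty)$, it cancels in the difference, so $C_\lambda(t)=J\bigl(B(t)-B(\pm\infty)\bigr)$ is actually independent of $\lambda$; the compactness argument for uniformity is then needed only for the dichotomy constants $K,\alpha$ of $e^{tJB_\lambda(\pm\infty)}$ (hyperbolicity for every $\lambda\in[0,1]$ being guaranteed by (L2) together with Lemma \ref{lem:hyperbolic_matrix}), and there it does work as you say.

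The one spot that needs tightening is (ii): the sentence claiming that ``the $V^-(A_\infty)$-component of any vector is exponentially contracted relative to its $V^+(A_\infty)$-component'' conflates the spectral splitting of the constant limit matrix with the nonautonomous dichotomy splitting of the perturbed system, and as stated it does not follow from the dichotomy estimates. The clean repair — which your alternative route already gestures at — is to first construct, by the same Lyapunov--Perron scheme run with the opposite projection, an \emph{invariant} complement $E^{uu}(\sigma)=\gamma_{\tau,\lambda}(\sigma)E^{uu}(\tau)$ of $E^s_\lambda(\sigma)$ that is a graph over $V^+(JB_\lambda(+\infty))$ with graph norm tending to $0$; then, given an arbitrary complement $W$ of $E^s_\lambda(\tau)$, decompose each $w\in W$ as $w=w^{ss}+w^{uu}$ with $w^{ss}\in E^s_\lambda(\tau)$, $w^{uu}\in E^{uu}(\tau)$, and use $\norm{\gamma_{\tau,\lambda}(\sigma)w^{ss}}\le Ke^{-\alpha(\sigma-\tau)}\norm{w^{ss}}$ against $\norm{\gamma_{\tau,\lambda}(\sigma)w^{uu}}\ge K^{-1}e^{\alpha(\sigma-\tau)}\norm{w^{uu}}$ to conclude that the gap distance between $\gamma_{\tau,\lambda}(\sigma)W$ and $E^{uu}(\sigma)$ vanishes, hence $\gamma_{\tau,\lambda}(\sigma)W\to V^+(JB_\lambda(+\infty))$. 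With that patch, your argument is complete and is, in essence, the finite-dimensional version of the proof in \cite{AM03} that the paper relies on.
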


We notice that the matrices 
$
\begin{pmatrix}
	P_- & Q_- \\
	Q_-^{\top} & R_- + \epsilon \Id
\end{pmatrix}
\quad \text{and} \quad
\begin{pmatrix}
	P_+ & Q_+ \\
	Q_+^{\top} & R_+ + \epsilon \Id
\end{pmatrix}
$
are both positive definite. By Lemma \ref{lem:transversal_L_D}, we have that \( V^{\pm}(JB_1(+\infty)) \cap \Lambda_D=\{0\} \) and \( V^{\pm}(JB_1(-\infty)) \cap \Lambda_D =\{0\}\). Denote by
$
\begin{pmatrix}
	M_1^{\pm} \\ I
\end{pmatrix}
\quad \text{and} \quad
\begin{pmatrix}
	N^{\pm}_1 \\ I
\end{pmatrix}
$
the Lagrangian frames of \( V^{\pm}(JB_1(-\infty)) \) and \( V^{\pm}(JB_1(+\infty)) \), respectively where $M^{\pm}_1, N^{\pm}_1\in \operatorname{Sym}(n,\R)$.

\begin{lem}\cite[Lemma 3.9]{HPWX20}\label{lem:matrix definite}
	Under the assumption (L2), then \( M^+_1, N^+_1 \) are positive definite and \( M_1^-,N^{-}_1 \) are negative definite.
\end{lem}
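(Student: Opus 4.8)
The plan is to represent each of the four invariant subspaces as a graph over $\Lambda_D$ and to detect the sign of its slope by means of a strictly monotone pairing along trajectories. By Lemma \ref{lem:transversal_L_D} the subspaces $V^{\pm}(JB_1(\pm\infty))$ are transversal to $\Lambda_D=\{(u,0)^\top\}$, so they carry the Lagrangian frames $\begin{pmatrix} M_1^{\pm} \\ I\end{pmatrix}$ and $\begin{pmatrix} N_1^{\pm} \\ I\end{pmatrix}$ with symmetric slopes. Writing a point of such a subspace as $(p,w)^\top=(Mb,b)^\top$, where the second block $w$ is the position and the first block $p$ is the momentum, invariance gives $p(t)=M\,w(t)$ for all $t$; it therefore suffices to determine the sign of $\inpro{Mw}{w}$ as $w$ sweeps $\R^n$, and likewise for $N$.

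First I would return from the constant--coefficient system $\dot z=JB_1(\pm\infty)z$ to the underlying operator. Introducing the momentum $p=P_\pm\dot w+Q_\pm w$, the operator reads $L_1 w=-\dot p+Q_\pm^\top\dot w+(R_\pm+\epsilon\Id)w$ at $t=\pm\infty$, so along a solution in the invariant subspace the equation $L_1w=0$ is exactly $\dot p=Q_\pm^\top\dot w+(R_\pm+\epsilon\Id)w$. I then introduce the pairing $E(t):=\inpro{p(t)}{w(t)}$ and differentiate it along the trajectory.

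The key computation is that, after substituting the expression for $\dot p$ and combining the two cross terms via $\inpro{Q_\pm^\top\dot w}{w}=\inpro{Q_\pm w}{\dot w}$, one obtains
\[
\dot E(t)=\inpro{P_\pm\dot w}{\dot w}+2\inpro{Q_\pm w}{\dot w}+\inpro{(R_\pm+\epsilon\Id)w}{w},
\]
which is precisely the quadratic form of the block matrix $\begin{pmatrix} P_\pm & Q_\pm \\ Q_\pm^\top & R_\pm+\epsilon\Id\end{pmatrix}$ evaluated at $(\dot w,w)^\top$. As noted just before the statement, this block matrix is positive definite by (L2); hence $\dot E(t)>0$ for every nontrivial trajectory, so $E$ is strictly increasing along any nonzero solution in the invariant subspace.

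Finally I would exploit the asymptotic decay. For $v\in V^{+}$ the solution $e^{tJB_1(\pm\infty)}v$ tends to $0$ as $t\to-\infty$, so $E(-\infty)=0$ and monotonicity forces $E(t)>0$ for every finite $t$; evaluating at $t=0$ yields $\inpro{M w(0)}{w(0)}>0$, and since $w(0)$ ranges over all of $\R^n$ this shows $M_1^{+},N_1^{+}$ are positive definite. Symmetrically, for $v\in V^{-}$ the solution decays as $t\to+\infty$, whence $E(+\infty)=0$ and $E(t)<0$ for finite $t$, giving $M_1^{-},N_1^{-}$ negative definite. The main obstacle I anticipate is the sign bookkeeping: keeping straight which frame block is momentum and which is position, so that $\Lambda_D$ and the graph relation $p=Mw$ are matched correctly, and verifying that $(\dot w(t),w(t))$ never vanishes for a nonzero solution (otherwise $p(t)=P_\pm\dot w+Q_\pm w$ would vanish too and the trajectory would be trivial), so that the inequality $\dot E>0$ is genuinely strict.
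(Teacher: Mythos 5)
Your argument is correct, and there is nothing internal to compare it with: the paper states Lemma \ref{lem:matrix definite} purely as a citation of \cite[Lemma 3.9]{HPWX20} and gives no proof, so your Lyapunov-type monotonicity argument serves as a clean, self-contained substitute. The key computation checks out: with $p=P_{\pm}\dot w+Q_{\pm}w$, the constant-coefficient equation $L_1w=0$ is exactly $\dot p=Q_{\pm}^{\top}\dot w+(R_{\pm}+\epsilon \Id)w$, whence $\frac{\d}{\d t}\inpro{p}{w}=\inpro{P_{\pm}\dot w}{\dot w}+2\inpro{Q_{\pm}w}{\dot w}+\inpro{(R_{\pm}+\epsilon \Id)w}{w}$, the quadratic form of the block matrix that is positive definite under (L2) after the $\epsilon$-shift; and your strictness step is sound, since $(\dot w(t_0),w(t_0))=(0,0)$ forces $p(t_0)=0$ and hence $z\equiv 0$ by uniqueness for the linear ODE, so $E$ is genuinely strictly increasing along every nonzero solution. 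The sign bookkeeping you flagged is consistent with the paper's conventions: in the paper's $B(t)$ the top-left block is $P^{-1}$, and the crossing-form identity $\Gamma(E^u(\tau),\Lambda_D;\tau_0)=\inpro{P^{-1}(\tau_0)v}{v}$ for $\xi=(v,0)^{\top}$ confirms that the first component of $z$ is the conjugate momentum $p=P\dot w+Qw$, so reading $\Lambda_D$ as the zero-position plane and the four spectral subspaces as graphs $p=M_1^{\pm}w$, $p=N_1^{\pm}w$ is the correct identification; the symmetry of the slopes, which you assumed, is precisely the Lagrangian graph condition and is asserted by the paper when the frames are introduced. Two further points in your favor: your $\dot E$ is the same quadratic form that drives the paper's own Lemma \ref{lem:nondegenerate_L} (it is the integrand of \eqref{eq:nondegenerate_L}), so the argument is consonant with the techniques already used in the paper; and it generalizes the familiar $P=\Id$, $Q=0$ computation from \cite{BCJLM18} to full Sturm--Liouville coefficients. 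Note also that your proof, unlike a continuity/homotopy argument (deforming the positive definite block matrix to the identity), does not require separately excluding that an eigenvalue of the slope crosses zero along the deformation — your monotone quantity rules out $V^{\pm}$ meeting the vertical plane $\{(0,w)^{\top}\}$ as a free by-product, since a nonzero solution with $w(0)\neq 0$ but $E(0)=0$ would contradict strict monotonicity together with the decay at the appropriate end.
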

From Lemma \ref{lem:matrix definite}, it is easy to see that 
\[
	V^+(JB_1(-\infty)) \cap V^-(JB_1(+\infty))=\{0\},
\]
From Lemma \ref{space convergence}, we have that/ \( \lim\limits_{\tau \to +\infty} E_1^s(\tau) = V^-(J B_1(+\infty)) \) and \( \lim\limits_{\tau \to -\infty} E_1^u(\tau) = V^+(J B_1(-\infty)) \), there exists a time \( T_0 > 0 \) such that
\begin{align}\label{eq:constant $T_0$}
	E_1^s(\tau_1) \cap E^u_1(\tau_2)\text{ for all } \tau_1>T_0 \text{ and }\tau_2< -T_0.
\end{align}
From \cite[Theorem 2]{HPWX20}, we have the following relation:
\begin{align}\label{eq:morse index 1= maslov index 1}
	\iMor(L_1) = -\iCLM(E_1^s(\tau), E_1^u(-\tau); \tau\in[0, +\infty)).
\end{align}
so we have that 
\begin{lem}\label{lem:Maslov equivalent}
	For $T>T_0$, we have that 
	\begin{align*}
		\iCLM\left(E_1^s(\tau), E_1^u(-\tau) ; \tau\in [0,+\infty)\right) = -\iCLM(E_1^s(T), E_1^u(\tau); \tau\in(-\infty, T]).
	\end{align*}
\end{lem}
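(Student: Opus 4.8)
The plan is to realise both sides as the Maslov index of paths in one continuous two–parameter family, and then to exploit the simple–connectivity of the parameter plane. Define
\[
G\colon \R^2 \longrightarrow \Lag{n}\times\Lag{n},\qquad G(\alpha,\beta)\=\bigl(E_1^s(\alpha),\,E_1^u(\beta)\bigr),
\]
which is continuous by Lemma~\ref{space convergence}. For a continuous path $c$ in $\R^2$ write $\mu(c)$ for the index $\iCLM$ of the pair $G\circ c$. The left–hand side is $\mu(P_1)$, where $P_1\colon\tau\mapsto(\tau,-\tau)$, $\tau\in[0,+\infty)$, and the right–hand side is $\mu(P_2)$, where $P_2\colon\tau\mapsto(T,\tau)$, $\tau\in(-\infty,T]$; so it suffices to prove $\mu(P_1)=-\mu(P_2)$. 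First I would truncate: since $T>T_0$, fix $S>T$; by the non–intersection in \eqref{eq:constant $T_0$} the tail of $P_1$ on $[S,+\infty)$ (where $\alpha=\tau>T_0$ and $\beta=-\tau<-T_0$) and the tail of $P_2$ on $(-\infty,-S]$ (where $\alpha=T>T_0$ and $\beta<-T_0$) meet no crossings, hence contribute $0$ and $\mu(P_1)=\mu(P_1')$, $\mu(P_2)=\mu(P_2')$ for the truncations $P_1'$ from $(0,0)$ to $(S,-S)$ and $P_2'$ from $(T,-S)$ to $(T,T)$.

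Next I would close these two segments into a loop by two auxiliary edges carrying no crossings. Let $D$ be the diagonal segment from $(T,T)$ to $(0,0)$, along which $G=(E_1^s(\gamma),E_1^u(\gamma))$; a crossing there would mean $E_1^s(\gamma)\cap E_1^u(\gamma)\neq\{0\}$, i.e.\ a nontrivial bounded solution of \eqref{eq:family h.s.} with $\lambda=1$, i.e.\ $w\in\ker L_1$. Since $L_1$ is non–degenerate by condition (L3), $D$ carries no crossing and $\mu(D)=0$. Let $H$ be the horizontal segment from $(S,-S)$ to $(T,-S)$; there $\alpha\in[T,S]$ (all $>T_0$) and $\beta=-S<-T_0$, so \eqref{eq:constant $T_0$} again gives no crossings and $\mu(H)=0$. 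The concatenation $P_1'\ast H\ast P_2'\ast D$ is a closed loop in $\R^2$, and its four vertices map under $G$ to the pairs $(E_1^s(0),E_1^u(0))$, $(E_1^s(S),E_1^u(-S))$, $(E_1^s(T),E_1^u(-S))$, $(E_1^s(T),E_1^u(T))$, all of which are transverse (by non–degeneracy of $L_1$ and by \eqref{eq:constant $T_0$}); thus $\iCLM$ is well defined on each edge and additive along the loop.

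Finally I would invoke homotopy invariance. Because $\R^2$ is simply connected, the loop is contractible through loops based at $(0,0)$; composing with the continuous $G$ and using that $\iCLM$ is invariant under homotopies with fixed endpoints, the index of $G$ along the loop equals that of the constant pair, namely $0$. Hence, by the catenation (additivity) property and the two vanishing edges,
\[
\mu(P_1')+\mu(H)+\mu(P_2')+\mu(D)=\mu(P_1')+\mu(P_2')=0,
\]
so $\mu(P_1)=\mu(P_1')=-\mu(P_2')=-\mu(P_2)$, which is the assertion (note that the loop traverses $P_1'$ and $P_2'$ in their native orientations, so no reversal sign intervenes). I expect the main obstacle to be the non–compactness of the two original intervals: the entire scheme works only because, beyond the thresholds $T_0$ and $-T_0$, the stable and unstable paths stay transverse and converge to the transverse asymptotic pair $\bigl(V^-(JB_1(+\infty)),V^+(JB_1(-\infty))\bigr)$ (Lemma~\ref{lem:matrix definite} and Lemma~\ref{space convergence}); this is precisely what legitimises the truncation and makes the auxiliary edges crossing–free, and it must be verified carefully before homotopy invariance can be applied.
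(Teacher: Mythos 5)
Your argument is correct, but it is organized quite differently from the paper's. The paper proceeds by successive one-parameter deformations: it first slides the anti-diagonal path via the stratum homotopy $\left(E_1^s(\tau+sT),E_1^u(-\tau+sT)\right)$ (legitimate because $\dim\left(E_1^s(sT)\cap E_1^u(sT)\right)\equiv 0$ by non-degeneracy of $L_1$), then produces the minus sign by the explicit \emph{reversal} property, truncates to $[-T_0,T]$ using \eqref{eq:constant $T_0$}, and finally applies a second stratum homotopy $\left(E_1^s(T+s(T-\tau)),E_1^u(\tau)\right)$ to freeze the first slot at $E_1^s(T)$. You instead work in the two-parameter picture $G(\alpha,\beta)=\left(E_1^s(\alpha),E_1^u(\beta)\right)$: after the same truncation step you close the two paths into a single contractible loop whose two auxiliary edges are crossing-free — the diagonal because $E_1^s(\gamma)\cap E_1^u(\gamma)\neq\{0\}$ would give a nontrivial element of $\ker L_1$, contradicting (L3), and the horizontal edge by \eqref{eq:constant $T_0$} — so that vanishing of $\iCLM$ around a null-homotopic loop yields the identity, with the sign coming from the orientation of traversal rather than from the reversal axiom. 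The two proofs consume exactly the same transversality inputs (diagonal transversality from (L3), tail transversality from \eqref{eq:constant $T_0$}, and the transverse limits supplied by Lemma \ref{space convergence} and Lemma \ref{lem:matrix definite}), and your loop formulation arguably makes their role more transparent by unifying the paper's two homotopies and reversal into one contractibility statement; the small price is that you invoke path additivity and the vanishing of $\iCLM$ on contractible loops, which are not among the properties listed in Section 4.1 (only reversal and stratum homotopy are), though they do follow from Properties I--VI of \cite{CLM94} together with the stratum homotopy applied to the based contraction $c_s(t)=(1-s)c(t)$, whose endpoints stay at $(0,0)$ where the pair $\left(E_1^s(0),E_1^u(0)\right)$ is transverse. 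You were also right to flag the non-compactness of the intervals as the point requiring care: your truncation is justified for precisely the reason the paper's is.
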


\begin{proof}
 Let \( T >T_0 \), we define the following homotopy Lagrangian path:
\begin{align*}
\left(E_1^s(\tau + s T), E_1^u(-\tau + s T)\right), \quad (\tau, s) \in [0,+\infty)\times [0, 1].
\end{align*}
It is important to note that \( \dim\left(E_1^s(s T) \cap E_1^u(s T)\right) \) remains constant for all \( s \in [0, 1] \), and  \( E_1^s(+\infty) \cap E_1^u(-\infty)=\{0\} \).

By the stratum homotopy invariance property, the reversal property of the Maslov index, we conclude that
\begin{align}\label{eq:sf=maslov 1}
\begin{aligned}
& \iCLM\left(E_1^s(\tau), E_1^u(-\tau) ; \tau \in [0,+\infty)\right) \\=& \iCLM\left(E_1^s(\tau + T), E_1^u(-\tau + T) ; \tau \in [0,+\infty)\right) \\
= & \iCLM\left(E_1^s(\tau + 2T), E_1^u(-\tau) ; \tau \in [-T, +\infty)\right)\\=&-\iCLM\left(E_1^s(-\tau + 2T), E_1^u(\tau) ; \tau \in (-\infty,T]\right)
\end{aligned}
\end{align}
By \eqref{eq:constant $T_0$}, we have that $E_1^s(-\tau + 2T)\cap E_1^u(\tau)=\{0\}$ for all $\tau<-T_0$, then by \eqref{eq:sf=maslov 1} we have that 
\begin{align}\label{eq:sf=maslov 1*}
	\iCLM\left(E_1^s(\tau), E_1^u(-\tau) ; \tau \in [0,+\infty)\right)=-\iCLM\left(E_1^s(-\tau + 2T), E_1^u(\tau) ; \tau \in [-T_0,T]\right)
\end{align}
 We construct the following homotopy Lagrangian path:
\begin{align*}
\left(E_1^s\left(T + s \left(T - \tau\right)\right), E_1^u(\tau)\right), \quad (\tau, s) \in \left[-T_0, T\right] \times [0, 1].
\end{align*}

By the stratum homotopy invariance, we deduce that
\begin{align}
& \iCLM \left(E_1^s\left(-\tau + 2T\right), E_1^u(\tau) ; \tau \in \left[-T_0, T\right]\right) \\
=&\iCLM\left(E_1^s\left(T\right), E_1^u(\tau) ; \tau \in \left[-T_0, T\right]\right) \\
=&\iCLM\left(E_1^s\left(T\right), E_1^u(\tau) ; \tau \in \left(-\infty, T\right]\right).
\end{align}
This, together with Equation \eqref{eq:sf=maslov 1*}, completes the proof.
\end{proof}

Thus, by \eqref{eq:morse index 1= maslov index 1} and Lemma \ref{lem:Maslov equivalent}, we have
\begin{align}\label{eq:morse 1= maslov 1*}
	\iMor(L_1) = \iCLM(E_1^s(T), E_1^u(\tau); \tau\in(-\infty, T]),
\end{align}
where \( T > T_0 \).

We begin by introducing the following family of differential operators:
\[
L_{T,\lambda}w = -\frac{\mathrm{d}}{\mathrm{d}t}\big(P(t) \dot{w}(t) + Q(t) w(t)\big) + Q^\top\!(t) \dot{w}(t) + R(t) w(t) + \lambda \epsilon w, \quad t \in (-\infty, T],
\]
where the domain is specified as \( \dom(L_{T,\lambda}) = \left\{ u \in W^{2,2}((-\infty, T], \mathbb{R}^n) \mid u(T) = 0 \right\} \).

For analytical purposes, we introduce the associated Hamiltonian system:
\begin{align}\label{eq:trunct h.s.}
	\begin{cases}
		\dot{z}(t) = J B_\lambda(t) z(t), & t \in (-\infty, T] \\
		\lim\limits_{t \to -\infty} z(t) = 0, & z(T) \in \Lambda_D
	\end{cases}
\end{align}
These Hamiltonian systems are closely linked to the following family of boundary value problems:
\begin{align}
	\begin{cases}
		L_{T,\lambda} u(t) = 0 \\
		\lim\limits_{t \to -\infty} u(t) = u(T) = 0
	\end{cases}
\end{align}
{
\begin{lem}\label{lem:trunct Maslov=Maslov}
	Under the assumptions (L1)-(L3), there exists $T_1>0$ such that for all $T\geq T_1$, 
	\begin{align}
		\iCLM(\Lambda_D, E^s(\tau); \tau\in(-\infty, T]) = \iCLM(\Lambda_D, E_1^s(\tau); \tau\in(-\infty, T]).
	\end{align}
\end{lem}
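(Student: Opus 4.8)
The plan is to prove the identity by a homotopy in the parameter $\lambda$, deforming $L=L_0$ to $L_1$ through the family $E_\lambda^s(\tau)$ and invoking the stratum homotopy invariance of the $\iCLM$-index already used in Lemma~\ref{lem:Maslov equivalent}. Concretely, I would consider the two-parameter family of Lagrangian pairs $\big(\Lambda_D, E_\lambda^s(\tau)\big)$ over the compact rectangle $[-R,T]\times[0,1]$. At every \emph{finite} $\tau$ the subspace $E_\lambda^s(\tau)$ depends jointly continuously on $(\tau,\lambda)$, since the fundamental solution $\gamma_{\tau,\lambda}$ does and the stable subspace is selected by the exponential dichotomy at $+\infty$, whose constants vary continuously in $\lambda$. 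Homotopy invariance on the rectangle then reduces the difference $\iCLM(\Lambda_D, E_0^s(\tau);\tau\in[-R,T])-\iCLM(\Lambda_D, E_1^s(\tau);\tau\in[-R,T])$ to the contributions of the two vertical edges $\{\tau=T\}$ and $\{\tau=-R\}$, so the entire argument comes down to showing that, for $T$ and $R$ large, neither edge carries a crossing with $\Lambda_D$, uniformly in $\lambda\in[0,1]$, and that all crossings of $\tau\mapsto E_\lambda^s(\tau)$ with $\Lambda_D$ lie in $[-R,T]$.

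For the right edge $\{\tau=T\}$ I would fix $T_1$ as follows. By Lemma~\ref{space convergence}(i), $E_\lambda^s(\tau)\to V^-(JB_\lambda(+\infty))$ in the gap topology as $\tau\to+\infty$, and by continuous dependence of the dichotomy on $\lambda$ this convergence is uniform for $\lambda\in[0,1]$. Since the block matrices $\begin{pmatrix}P_+ & Q_+\\ Q_+^\top & R_+ +\lambda\epsilon\Id\end{pmatrix}$ are positive definite for every $\lambda\in[0,1]$, Lemma~\ref{lem:transversal_L_D} yields $V^-(JB_\lambda(+\infty))\cap\Lambda_D=\{0\}$, and transversality is an open condition. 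Hence there is $T_1>0$ such that $E_\lambda^s(T)\cap\Lambda_D=\{0\}$ for all $T\ge T_1$ and all $\lambda\in[0,1]$, so the edge $\{\tau=T\}$ is crossing-free and contributes $0$.

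The main obstacle is the left edge, i.e. the behaviour as $\tau\to-\infty$, because the family $\lambda\mapsto\lim_{\tau\to-\infty}E_\lambda^s(\tau)$ is discontinuous at the possibly degenerate endpoint $\lambda=0$. For $\lambda\in(0,1]$ the operator $L_\lambda$ is nondegenerate by (L3), so there are no nontrivial bounded solutions; every solution in $E_\lambda^s(\tau)$ decays at $+\infty$ and grows at $-\infty$ along the hyperbolic directions of $JB_\lambda(-\infty)$, whence $E_\lambda^s(\tau)\to V^-(JB_\lambda(-\infty))$ as $\tau\to-\infty$, a subspace transversal to $\Lambda_D$ by Lemma~\ref{lem:transversal_L_D}. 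For $\lambda=0$ the kernel of $L$ may be nontrivial, and a kernel element $w_0$ decays at both ends, so near $-\infty$ it behaves like $e^{\mu t}\xi$ with $\mu$ an eigenvalue of $JB(-\infty)$ in $V^+(JB(-\infty))$; its phase-space direction then converges to a line in $V^+(JB(-\infty))$ rather than in $V^-(JB(-\infty))$. I would therefore analyse the evaluation map $w\mapsto w(\tau)$ on the $n$-dimensional space of solutions decaying at $+\infty$, and show, using the distinct exponential rates forced by the hyperbolicity of $JB(-\infty)$, that its determinant does not vanish for $\tau$ sufficiently negative; equivalently, that the limiting Lagrangian $\lim_{\tau\to-\infty}E^s(\tau)$ remains transversal to $\Lambda_D$. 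This is the technical heart: it guarantees that crossings with $\Lambda_D$ do not accumulate at $-\infty$ and are confined to a compact interval $[-R_0,T]$ uniformly in $\lambda$.

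Granting this confinement, I would fix $R\ge R_0$ and $T\ge T_1$. Both vertical edges of $[-R,T]\times[0,1]$ are then free of crossings with $\Lambda_D$, and since confinement gives $\iCLM(\Lambda_D, E_\lambda^s(\tau);\tau\in(-\infty,T])=\iCLM(\Lambda_D, E_\lambda^s(\tau);\tau\in[-R,T])$ for each $\lambda$, stratum homotopy invariance forces the top and bottom edges to carry the same index, i.e.
\[
\iCLM(\Lambda_D, E^s(\tau);\tau\in(-\infty,T]) = \iCLM(\Lambda_D, E_1^s(\tau);\tau\in(-\infty,T]),
\]
which is the assertion. The step I expect to require the most work is precisely the uniform-in-$\lambda$ control at $-\infty$, because the limiting stable subspace jumps at $\lambda=0$ whenever $L$ is degenerate, and one must verify that this jump does not create a spurious crossing with $\Lambda_D$ at the left corner.
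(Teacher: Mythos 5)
Your homotopy-rectangle skeleton is sound, and in outline it parallels what the paper does: the paper also deforms $\lambda$ from $0$ to $1$ at a fixed large $T$, only it encodes the vertical edge at $\tau=T$ through the truncated boundary value problem \eqref{eq:trunct h.s.} and the vanishing of $\spfl{F_{T,\lambda};\lambda\in[0,1]}$, invoking \cite[Theorem 2]{HP17} in place of a two-parameter stratum homotopy of Lagrangian pairs. Note also a mismatch of conventions: despite the ``$E^s$'' in the statement, the object the paper actually analyses (and later uses in the proof of Theorem \ref{thm:main result}) is the unstable family $E^u_\lambda(\tau)$, for which the delicate end of the path $(-\infty,T]$ is the \emph{right} edge $\tau=T$ at the degenerate parameter $\lambda=0$, with the left end harmless since $E^u_\lambda(-\infty)=V^+(JB_\lambda(-\infty))$ is transversal to $\Lambda_D$ by Lemma \ref{lem:transversal_L_D}; your difficulty at $\tau\to-\infty$ is the mirror image of the same phenomenon, so your orientation of the problem is internally consistent but your conclusion would still need translating to the $E^u$ version that the main theorem consumes.

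The genuine gap is that the step you yourself call the technical heart is left as a plan, and the tool you propose for it is insufficient. When $\ker L\neq\{0\}$, ``distinct exponential rates'' and a nonvanishing determinant of the evaluation map do not yield transversality of the limit set of $E^s(\tau)$ at the degenerate end: subsequential limits contain a $\dim\ker L$-dimensional piece inside the wrong spectral subspace $V^+(JB(-\infty))$, and transversality to $\Lambda_D$ needs an extra input --- for instance, that each subsequential limit is invariant under $e^{tJB(-\infty)}$, hence splits as a sum of a subspace of $V^+$ and a subspace of $V^-$, after which the definiteness of $M_1^{\pm}$, $N_1^{\pm}$ from Lemma \ref{lem:matrix definite} rules out intersection with $\Lambda_D$; nothing of this sort appears in your sketch. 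Moreover, your rectangle argument needs the crossings confined to $[-R,T]$ \emph{uniformly} in $\lambda\in[0,1]$ and the edge $\{\tau=-R\}$ crossing-free for all $\lambda$ simultaneously; since the limiting Lagrangian jumps at $\lambda=0$, crossings can a priori drift toward the degenerate end as $\lambda\to0^+$, and you offer no uniform control. The paper sidesteps both difficulties with a counting mechanism you never use: the crossing form with $\Lambda_D$ is positive definite, $\Gamma=\langle P^{-1}(\tau_0)v,v\rangle>0$, so by \eqref{eq:compute maslov by cross form} the index is monotone in $T$ and equals the number of crossings counted with multiplicity; a H\"ormander/triple-index comparison via \eqref{eq:def of Hormand}, \eqref{eq:relationship Hormand trip}, \eqref{eq:def of trip 2} together with \cite[Theorem 2]{CH07} bounds it uniformly by $\iMor(L)+n$, whence only finitely many crossings for each $\lambda$ and a suitable transversal time $T$, after which the spectral flow identity closes the argument. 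Until you either prove the limit-set transversality plus the uniform confinement, or replace them by such a monotonicity-plus-uniform-bound argument, the proposal is incomplete exactly where the paper does its real work.
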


\begin{proof}
	Consider the Maslov index
\[
	\iCLM(\Lambda_D, E_\lambda^s(\tau); \tau\in(-\infty, T]).
\]
For a crossing point $\tau_0$ (i.e., $\Lambda_D\cap E^u_\lambda(\tau_0)\neq\{0\}$), a straightforward calculation reveals that the crossing form satisfies
\[
\Gamma(E^u_\lambda(\tau),\Lambda_D;\tau_0)=\langle P(\tau_0)v,v\rangle >0
\]
for $\begin{pmatrix}
	v\\0
\end{pmatrix}\in \Lambda_D\cap E^u_\lambda(\tau_0)$. Applying \eqref{eq:compute maslov by cross form}, we obtain
\begin{align}
	\iCLM(\Lambda_D, E_\lambda^s(\tau); \tau\in(-\infty, T])=\sum_{\tau\in(-\infty,T)}\dim (\Lambda_D\cap E^u_\lambda(\tau)),
\end{align}
demonstrating that $\iCLM(\Lambda_D, E_\lambda^s(\tau);\tau\in (-\infty, T])$ is non-decreasing in $T$.

Given $E^u_\lambda(-\infty)\cap \Lambda_D=\{0\}$, direct computation yields
\begin{align}
	&\iCLM(\Lambda_D, E_\lambda^s(\tau); (-\infty, T])-\iCLM(E^s_\lambda(T), E_\lambda^s(\tau); (-\infty, T])\\
	=&s(E^u_\lambda(-\infty),E^u_\lambda(T);E^s(T),\Lambda_D) \quad (\text{by \eqref{eq:def of Hormand}})\\
	=&\iota(E^u_\lambda(-\infty),E^u_\lambda(T),E_\lambda^s(T))-\iota(E^u_\lambda(-\infty),E^u_\lambda(T),\Lambda_D) \quad (\text{by \eqref{eq:relationship Hormand trip}})\\
	=&\coiMor(\mathscr{Q}(E^u_\lambda(-\infty),E^u_\lambda(T),E_\lambda^s(T)))-\coiMor(\mathscr{Q}(E^u_\lambda(-\infty),E^u_\lambda(T),\Lambda_D)) \quad (\text{by \eqref{eq:def of trip 2}})\\
	\leq & \coiMor(\mathscr{Q}(E^u_\lambda(-\infty),E^u_\lambda(T),E_\lambda^s(T)))\leq n \quad (\text{by \eqref{eq:definition q}})
\end{align}
This establishes the inequality
\[
	\iCLM(\Lambda_D, E_\lambda^s(\tau); \tau\in(-\infty, T])\leq \iCLM(E^s_\lambda(T), E_\lambda^s(\tau); \tau\in(-\infty, T])+n.
\]
For sufficiently large $T$, \cite[Theorem 2]{CH07} implies
\[
	\iCLM(E^s_\lambda(T), E_\lambda^s(\tau); \tau\in(-\infty, T])=\iMor(L+\lambda\epsilon\Id)\leq \iMor(L),
\]
leading to the uniform bound
\[
	\iCLM(\Lambda_D, E_\lambda^s(\tau); \tau\in(-\infty, T])\leq  \iMor(L)+n
\]
for all $\lambda\in[0,1]$. Consequently, for each $\lambda\in[0,1]$, there exist at most $\iMor(L)+n$ points where $\Lambda_D\cap E^u_\lambda(\tau)\neq \{0\}.$ In particular, for $\lambda=0,1$, there exists $ T_1>0$ such that $\Lambda_D\cap E^u(\tau) =\{0\}$ and $\Lambda_D\cap E^u_1(\tau)=\{0\}$ hold for all $\tau\geq T_1$.

So there must exists $T>T_1$ with $\Lambda_D\cap E^u_\lambda(T)=\{0\}$ for all $\lambda\in[0,1]$, system \eqref{eq:trunct h.s.} admits only trivial solutions. Let $ F_{T,\lambda}$ denote the corresponding Hamiltonian operator of \eqref{eq:trunct h.s.}, then
\begin{align}\label{eq:trunct Maslov=Maslov1}
	\spfl{ F_{T,\lambda};\lambda\in[0,1]}=0.
\end{align}

Combining \cite[Theorem 2]{HP17} with \eqref{eq:trunct Maslov=Maslov1} yields
\begin{align}\label{eq:trunct Maslov=Maslov2}
		&\iCLM(\Lambda_D, E^s(\tau); \tau\in(-\infty, T]) \\
		= &\iCLM(\Lambda_D, E_1^s(\tau); \tau\in(-\infty, T]) + \spfl{F_{T,\lambda}; \lambda\in [0,1]} \\
		=& \iCLM(\Lambda_D, E_1^s(\tau); \tau\in(-\infty, T]).
\end{align}
The invariance of both $\iCLM(\Lambda_D, E^s(\tau);\tau\in (-\infty, T])$ and $\iCLM(\Lambda_D, E_1^s(\tau);\tau\in (-\infty, T]) $ for $T\geq T_1$ ensures that \eqref{eq:trunct Maslov=Maslov2} holds universally for $T\geq T_1$.
\end{proof}

\begin{proof}[The proof of Theorem \ref{thm:main result}]
	
	Recall the triple index and Hörmander index defined in Section 4.2, then  we have:
	\begin{align}\label{eq:difference of maslov}
		\begin{split}
			&\iCLM(E_1^s(T), E_1^u(\tau); \tau\in(-\infty, T]) - \iCLM(\Lambda_D, E_1^u(\tau); \tau\in(-\infty, T]) \\
			=& s(E_1^u(-\infty), E_1^u(T); \Lambda_D, E_1^s(T)) \quad (\text{by \eqref{eq:def of Hormand}})\\
					\end{split}
	\end{align}

By \eqref{eq:morse 1= maslov 1*}, we have that $\iCLM(E_1^s(T), E_1^u(\tau); \tau\in(-\infty, T])$ is invariant for all $T>T_0$.

	Since \( L_1 \) is non-degenerate, we have \( E_1^s(0) \cap E_1^u(0)=\{0\} \). By (ii) of Lemma \ref{space convergence}, we then have
	\[
	\lim_{\tau \to +\infty} E_1^u(\tau) = \lim_{\tau \to +\infty} \gamma_{0, 1}(\tau) E_1^u(0) = V^+(JB_1(+\infty)).
			\]
			Moreover, if $\tau\geq T_1$, we have that 
\[
		\Lambda_D\cap E_1^u(\tau)=\{
			0
		\}.
		\]
This implies that $\iCLM(L_D, E_1^u(\tau);\tau\in (-\infty, T])$ is invariant for all $T>T_1$.

Set $T_\infty\=\max\{T_0,T_1\}$, so \eqref{eq:difference of maslov} holds for all $T>T_\infty$.

So we have that 
\begin{align}
			&\lim_{T\to+\infty}s(E_1^u(-\infty), E_1^u(T); \Lambda_D, E_1^s(T)) \\
		=&s(E_1^u(-\infty),V^+(JB_1(+\infty);\Lambda_D,V^-(JB_1(+\infty))))\\
		=& \iota(E_1^u(-\infty), \Lambda_D, V^-(JB_1(+\infty))) - \iota(V^+(JB_1(+\infty)), \Lambda_D, V^-(JB_1(+\infty))) \quad (\text{by \eqref{eq:relationship Hormand trip}})\\
   =& \coiMor(\mathscr{Q}(E_1^u(-\infty), \Lambda_D, V^-(JB_1(+\infty)))) - \coiMor(\mathscr{Q}(V^+(JB_1(+\infty)), \Lambda_D, V^-(JB_1(+\infty))))\quad(\text{by \eqref{eq:def of trip 2}}) \\
   =& \coiMor(M_1^+ - N^-_1) - \coiMor(N_1^+-N_1^-)   =n - n=0\quad (\text{by  \eqref{eq:definition q} and Lemma \ref{lem:matrix definite} })
   \end{align}
so we conclude that
	\[
	\iCLM(E_1^s(T), E_1^u(\tau); \tau\in(-\infty, T]) = \iCLM(\Lambda_D, E_1^u(\tau); \tau\in(-\infty, T]).
	\]
	Thus, by \eqref{eq:morse 1= maslov 1*} and  lem \ref{lem:trunct Maslov=Maslov}, we have that
	\begin{align}
					&\iMor(L) = \iMor(L_1) = \iCLM(E_1^s(T), E_1^u(\tau); \tau\in(-\infty, T]) \\
			=& \iCLM(\Lambda_D, E_1^u(\tau); \tau\in(-\infty, T]) = \iCLM(\Lambda_D, E^u(\tau); \tau\in(-\infty, T]),
			\end{align}
	i.e., 
	\begin{align}\label{eq:morse = maslov }
		\iMor(L) = \iCLM(\Lambda_D, E^u(\tau); \tau\in(-\infty, T]),
	\end{align}
	where $T>T_\infty$.

	Let \( \tau_0 \) be a crossing instant (i.e., \( \Lambda_D \cap E^u(\tau_0) \neq \{0\} \)). By \eqref{eq:def of crossing form in Maslov}, the associated crossing form satisfies  
	$$
		\Gamma(E^u(\tau), \Lambda_D; \tau_0) = \langle B(\tau_0) \xi, \xi \rangle = \langle P^{-1}(\tau_0) v, v \rangle,
	$$  
	where \( \xi = \begin{pmatrix} v \\ 0 \end{pmatrix} \in \Lambda_D \cap E^u(\tau_0) \). So from \eqref{eq:morse = maslov } and \eqref{eq:compute maslov by cross form} with the above discussion, we have that 
	\[
		\iMor(L) = \sum_{\tau\in\R} \dim(E^u(\tau) \cap \Lambda_D).
	\] 

\end{proof}

}
\section{Instability of  Traveling Solutions in Gradient Reaction–Diffusion Systems}
Consider the reaction-diffusion system:
\begin{align}\label{eq:r.d.eq.}
u_t= u_{x x}+\nabla F(u), \quad u \in \R^n,
\end{align}
where $x, t \in \R$ denote space and time, respectively, $u \in \R^n$, and $\nabla F$ represents the gradient of the function $F: \R^n \rightarrow \R$. A traveling wave solution $u^*$, which depends on the moving frame $\xi=x-c t$, satisfies equation \eqref{eq:r.d.eq.} along with the asymptotic condition:
\begin{align*}
u^*(\xi)\to u_\pm \text{ as } \xi \rightarrow \pm\infty.
\end{align*}
Here, $u_{\pm}$ denote the constant equilibria of \eqref{eq:r.d.eq.}, satisfying $\nabla F\left(u_{\pm}\right)=0$.

Rewriting in the moving frame $\xi=x-c t$, a traveling front solution $u^*$ of \eqref{eq:r.d.eq.} can be regarded as a homoclinic solution $w^*(\xi)=u^*(x,t)$ of the following equation:
\begin{align}\label{eq:station eq.}
\begin{cases}
	w_{\xi \xi}+c w_{\xi}+ \nabla F(w)=0, \\
	\lim\limits_{\xi \rightarrow -\infty} w(\xi)=u_-\text{ and }\lim \limits_{\xi \rightarrow +\infty} w(\xi)=u_+.
\end{cases}
\end{align}

The stability analysis is closely related to the spectral properties of the operator:
\[
L:=\frac{\d^2}{\d\xi^2}+c \frac{\d}{\d \xi}+B(\xi),
\]
which arises from linearizing \eqref{eq:station eq.} along $w^*$, where $B(\xi)=\nabla^2 F\left(w^*\right)$. The matrices $B({\pm\infty}):=\lim \limits_{\xi \rightarrow \pm\infty} B(\xi)$ are well-defined, and there exists a constant $C>0$ such that:
\[
\langle B(\xi) v, v\rangle \leqslant C|v|^2, \quad \forall (\xi, v) \in \R \times \R^n.
\]

Since a wave solution of \eqref{eq:r.d.eq.} possesses translation invariance, it is termed nondegenerate if zero is a simple eigenvalue of $L$.

\begin{defn}
	A nondegenerate wave solution of \eqref{eq:r.d.eq.} is spectrally stable if all nonzero eigenvalues of $L$ are negative.
\end{defn}

In this paper, we consider the following assumption:
\begin{itemize}
	\item[(H)] The matrices $B(\pm\infty)$ are both negative definite. 
\end{itemize}

Due to the presence of the $\frac{\d}{\d \xi}$ term, $L$ does not exhibit a Hamiltonian structure. This issue can be circumvented by considering the transformed operator:
\begin{align*}
\mathbb{L}:=-e^{\frac{c \xi}{2}} L e^{-\frac{c \xi}{2}}=-\frac{{\d}^2}{\d\xi^2}+\frac{c^2}{4} \Id-B(\xi),
\end{align*}
as discussed in \cite{HS18,C19}.

Under assumption (H), conditions (L1) and (L2) hold for $\mathbb{L}$. By applying a symplectic change of coordinates, the equation $\mathbb{L}u=0$ can be rewritten as the Hamiltonian system:
\begin{align}\label{eq:hs about rd}
	\dot{y}=JA(t)y,
\end{align}
where
\[
A(t)=\begin{pmatrix}
	\Id&0\\0&B-\frac{c^2}{4}\Id
\end{pmatrix}.
\]
Denoting $E^u(\tau)$ as the unstable space of \eqref{eq:hs about rd}, Theorem \ref{thm:main result} gives:
\begin{align}\label{eq:morse index about r.d.eq.}
	\iMor(\mathbb{L})=\sum_{\tau\in \R}\dim \left(\Lambda_D\cap E^u(\tau)\right).
\end{align}

In \cite{X21}, the author considers the eigenvalue problem of the operator:
\[
\hat L:=\frac{\d^2}{\d\xi^2}+c \frac{\d}{\d \xi}+QB(\xi),
\]
where $Q=\begin{pmatrix}
	\Id_r&0\\0&-\Id_{n-r}
\end{pmatrix}$, and establishes the following result:

\begin{prop}\cite[Corollary 2.5.]{X21}\label{pro:equivalent eigenvalue}
	For $\lambda \in \C\backslash\C^-$ and $\phi \in W^{2,2}\left(\R, \C^n\right)$, we have:
	\[
	\phi \in \ker(\hat L-\lambda \Id) \quad \text{if and only if} \quad e^{\frac{c \xi}{2}} \phi \in \ker(\hat{\mathbb{L}}+\lambda \Id),
	\]
	where $\hat{\mathbb{L}}:=-\frac{{\d}^2}{\d\xi^2}+\frac{c^2}{4} \Id-QB$.
\end{prop}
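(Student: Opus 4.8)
The plan is to reduce the asserted equivalence to the exponential conjugation that already relates $L$ and $\mathbb{L}$ earlier in this section. First I would establish the operator identity
\[
\hat{\mathbb{L}} \;=\; -\,e^{\frac{c\xi}{2}}\,\hat L\, e^{-\frac{c\xi}{2}},
\]
in exact analogy with the relation $\mathbb{L} = -e^{\frac{c\xi}{2}} L e^{-\frac{c\xi}{2}}$ recorded above. This is a direct computation: writing $\phi = e^{-\frac{c\xi}{2}}\psi$ and applying the product rule, the first-order term $c\,\d/\d\xi$ is absorbed and one obtains $e^{\frac{c\xi}{2}}\hat L\,e^{-\frac{c\xi}{2}}\psi = \psi'' - \tfrac{c^2}{4}\psi + QB\psi$, whose negative is precisely $\hat{\mathbb{L}}\psi$. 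Relative to the computation for $\mathbb{L}$, only the symmetric zeroth-order term $B$ is replaced by $QB$, so no new cancellation is required.

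Granting this identity, the correspondence of kernels is immediate at the level of classical (or distributional) solutions: for any $\phi$ one has $\hat L\phi = \lambda\phi$ if and only if $\hat{\mathbb{L}}\,(e^{\frac{c\xi}{2}}\phi) = -\lambda\,e^{\frac{c\xi}{2}}\phi$. Hence the multiplication map $\phi\mapsto e^{\frac{c\xi}{2}}\phi$ carries solutions of $(\hat L-\lambda\Id)\phi=0$ bijectively onto solutions of $(\hat{\mathbb{L}}+\lambda\Id)\psi=0$, with inverse $\psi\mapsto e^{-\frac{c\xi}{2}}\psi$. The sign $-\lambda$ in the target kernel is exactly the one appearing in the statement.

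The substantive point, and the place where the restriction $\lambda\in\C\setminus\C^-$ is indispensable, is the compatibility of \emph{function spaces}: the multiplier $e^{\frac{c\xi}{2}}$ is unbounded on $L^2(\R)$—it grows at one end and decays at the other—so membership in $W^{2,2}(\R,\C^n)$ is not preserved automatically. I would control this through an asymptotic exponential-dichotomy analysis of the eigenvalue ODE. As $\xi\to\pm\infty$ the equation $\hat L\phi=\lambda\phi$ limits to the constant-coefficient system with coefficient $QB(\pm\infty)$; diagonalizing $QB(\pm\infty)$ with spectral values $\beta$, each scalar mode of $\phi$ behaves like $e^{\nu\xi}$ with $\nu = -\tfrac{c}{2}\pm\sqrt{\tfrac{c^2}{4}-\beta+\lambda}$, while the corresponding mode of $\psi=e^{\frac{c\xi}{2}}\phi$ behaves like $e^{\mu\xi}$ with $\mu=\nu+\tfrac{c}{2}=\pm\sqrt{\tfrac{c^2}{4}-\beta+\lambda}$—precisely the spatial exponents governing $\hat{\mathbb{L}}\psi=-\lambda\psi$. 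I would then verify, using assumption (H), that for $\Real\lambda\ge 0$ the asymptotic systems attached to both $\hat L$ and $\hat{\mathbb{L}}$ are hyperbolic (the exponents avoid the imaginary axis), so that at each end the real parts split strictly about $0$ and both problems enjoy an exponential dichotomy. Under the shift $\mu=\nu+\tfrac{c}{2}$, the $L^2$-decaying modes of $\phi$ at $+\infty$ (and the admissible modes at $-\infty$) then match exactly the $L^2$-decaying modes of $\psi$, and conversely; this upgrades the algebraic kernel correspondence to the claimed correspondence in $W^{2,2}$.

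I expect the main obstacle to be exactly this last step: tracking, \emph{in both directions}, that the uniform exponent shift by $\tfrac{c}{2}$ sends the two-sided $L^2$-decay condition for $\phi$ to that for $\psi$. It is here that the hypothesis $\Real\lambda\ge 0$ is forced, since it is what guarantees the clean saddle splitting of the exponents (and hence the exponential dichotomy) throughout $\C\setminus\C^-$. The conjugation identity and the algebraic bijection of kernels are routine; the genuine work lies in converting the unbounded-weight issue into a statement about spatial eigenvalues and in invoking the dichotomy—justified by the asymptotic hyperbolicity from (H)—to ensure that only the finitely many modes with the correct sign contribute on each side.
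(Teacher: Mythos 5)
The paper itself offers no proof of this proposition: it is imported verbatim from \cite[Corollary 2.5]{X21}, so the only meaningful comparison is with the standard argument behind that citation. Your strategy --- the conjugation identity $\hat{\mathbb{L}}=-e^{\frac{c\xi}{2}}\hat L\,e^{-\frac{c\xi}{2}}$, the resulting pointwise bijection of solutions with the eigenvalue flipped in sign, and then a transfer of two-sided $L^2$-decay through the unbounded weight via asymptotic spatial eigenvalues and exponential dichotomies --- is exactly that standard proof, and your bookkeeping is correct: $e^{\frac{c\xi}{2}}\hat L e^{-\frac{c\xi}{2}}\psi=\psi''-\tfrac{c^2}{4}\psi+QB\psi$, and the exponent shift is $\mu=\nu+\tfrac c2$. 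Two refinements: only the \emph{forward} direction genuinely needs the dichotomy, since in the backward direction $\phi\in W^{2,2}(\R,\C^n)$ is part of the hypothesis and the conjugation identity alone gives $\hat L\phi=\lambda\phi$; and the decisive inequality you leave implicit is $\Real\sqrt{\tfrac{c^2}{4}-\beta+\lambda}\,\geq\,\bigl(\Real(\tfrac{c^2}{4}-\beta+\lambda)\bigr)^{1/2}>\tfrac{c}{2}$, which holds whenever $\beta<0$ and $\Real\lambda\geq 0$ and is what guarantees both the $n$--$n$ splitting for $\hat L$ and the strict decay of $e^{\frac{c\xi}{2}}\phi$ at the growing end of the weight.

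The one genuine soft spot is your claim that assumption (H) alone yields hyperbolicity of the asymptotic systems for \emph{general} $Q$. It does not: if $B(\pm\infty)\prec 0$ and $S:=(-B(\pm\infty))^{1/2}$, then $QB(\pm\infty)=-QS^{2}$ is similar to $S(-Q)S$, a symmetric matrix congruent to $-Q$, so $QB(\pm\infty)$ has exactly $n-r$ \emph{positive} real eigenvalues whenever $Q\neq \Id$. For such an eigenvalue $\beta>\tfrac{c^2}{4}$ and $\lambda=0\in\C\backslash\C^-$, the exponents $\pm\sqrt{\tfrac{c^2}{4}-\beta}$ attached to $\hat{\mathbb{L}}+\lambda\Id$ are purely imaginary, there is no dichotomy, and the decay-transfer step of your proof breaks down (indeed $\beta$ then lies in right-half-plane essential spectrum). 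So for general $Q$ the hyperbolicity must come from the standing hypotheses of \cite{X21}, not from (H) as stated here; for the only instance this paper actually uses --- the subsequent Lemma with $Q=\Id$, where every $\beta<0$ --- your argument is complete and correct.
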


Setting $Q:=\Id$ and following a similar argument as in Proposition \ref{pro:equivalent eigenvalue}, we obtain:

\begin{lem}
	For $\lambda \in \C\backslash\C^-$ and $\phi \in W^{2,2}\left(\R, \mathbb{C}^n\right)$, we have:
	\[
	\phi \in \ker(L-\lambda \Id) \quad \text{if and only if} \quad e^{\frac{c \xi}{2}} \phi \in \ker(\mathbb{L}+\lambda \Id).
	\]
\end{lem}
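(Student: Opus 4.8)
The plan is to split the statement into a pointwise algebraic identity and a functional-analytic equivalence, the latter being the only place where the spectral restriction $\lambda\in\C\backslash\C^-$ is actually used. First I would record the intertwining relation by direct computation. Set $\psi = e^{\frac{c\xi}{2}}\phi$, so that $\phi = e^{-\frac{c\xi}{2}}\psi$. Using the definition $\mathbb{L} = -e^{\frac{c\xi}{2}} L e^{-\frac{c\xi}{2}}$, one computes
\[
(\mathbb{L}+\lambda\Id)\psi = -e^{\frac{c\xi}{2}} L e^{-\frac{c\xi}{2}}\psi + \lambda e^{\frac{c\xi}{2}}\phi = -e^{\frac{c\xi}{2}}(L-\lambda\Id)\phi .
\]
Since $e^{\frac{c\xi}{2}}$ is everywhere nonzero, this yields $(\mathbb{L}+\lambda\Id)\psi = 0$ if and only if $(L-\lambda\Id)\phi = 0$, as an identity between functions on $\R$. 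This step requires no hypothesis on $\lambda$.

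The substantive point is to verify that $\phi\in W^{2,2}(\R,\C^n)$ if and only if $\psi = e^{\frac{c\xi}{2}}\phi\in W^{2,2}(\R,\C^n)$. Because $e^{\frac{c\xi}{2}}$ decays on one half-line and grows on the other, $W^{2,2}$-membership is not automatically preserved and must be forced by the decay of the solutions themselves. I would analyze the asymptotic ODE: as $\xi\to\pm\infty$ the equation $(L-\lambda\Id)\phi=0$ approaches the constant-coefficient system governed by $B(\pm\infty)$, whose characteristic exponents $\mu$ solve $\mu^2+c\mu+(b_j^\pm-\lambda)=0$, where $b_j^\pm$ runs over the eigenvalues of $B(\pm\infty)$. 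The corresponding exponents for $\mathbb{L}+\lambda\Id$ are exactly $\mu+\tfrac{c}{2}$, so the weight $e^{\frac{c\xi}{2}}$ shifts every exponent by $\tfrac{c}{2}$ and matches the two asymptotic problems. Under assumption (H) the matrices $B(\pm\infty)$ are negative definite, hence the essential spectrum of $L$ lies in $\C^-$; for $\lambda\in\C\backslash\C^-$ the asymptotic systems are hyperbolic, and the admissible $W^{2,2}$-solution at each end is the one built from the exponents with the correct sign of $\Real(\mu)$. The key is that this selection is preserved under the shift by $\tfrac{c}{2}$, so $\phi$ decays at the rate required for $L$ precisely when $\psi$ decays at the rate required for $\mathbb{L}$.

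The main obstacle is exactly this exponential-weight bookkeeping: one must check that adding $\tfrac{c}{2}$ does not push any relevant characteristic exponent across the imaginary axis, a fact guaranteed by the location of $\lambda$ to the right of the essential spectrum, i.e. $\lambda\in\C\backslash\C^-$. Since this is structurally identical to the argument behind Proposition \ref{pro:equivalent eigenvalue} specialized to $Q=\Id$, I would carry out the estimate by the same exponential-dichotomy reasoning used there, changing only the asymptotic matrices to those of the unweighted operator $L$.
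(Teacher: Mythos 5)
Your proposal is correct and takes essentially the same route as the paper: the paper proves this lemma simply by specializing Proposition \ref{pro:equivalent eigenvalue} (\ie \cite[Corollary 2.5]{X21}) to $Q=\Id$, and your two-step argument --- the conjugation identity $(\mathbb{L}+\lambda\Id)e^{\frac{c\xi}{2}}\phi=-e^{\frac{c\xi}{2}}(L-\lambda\Id)\phi$ together with the exponential-dichotomy bookkeeping showing the shift $\mu\mapsto\mu+\tfrac{c}{2}$ preserves the decaying solutions when $\Real\lambda\geq 0$ and $B(\pm\infty)$ is negative definite --- is precisely the content of that cited argument. Your observation that $\Real\sqrt{\tfrac{c^2}{4}+\lambda-b_j^{\pm}}>\tfrac{\abs{c}}{2}$ is the key quantitative point, correctly identified as the only place where $\lambda\in\C\backslash\C^-$ and assumption (H) enter.
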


By the translation invariance property of \eqref{eq:r.d.eq.}, it follows that $\dot{w}^*\in\ker{L}$. Consequently, $e^{\frac{c \xi}{2}}\dot{w}^*\in \ker{\mathbb{L}}$, which implies
\begin{align}\label{eq:L_D cap E^u}
	\begin{pmatrix}
		e^{\frac{c \tau}{2}}\ddot{w}^*(\tau)+\frac{c}{2}e^{\frac{c \tau}{2}}\dot{w}^*(\tau)\\
		e^{\frac{c \tau}{2}}\dot{w}^*(\tau)
	\end{pmatrix}\in E^u(\tau).
\end{align}

\begin{thm}
	If there exists $\xi_0$ such that $\frac{\d}{\d \xi}w^*|_{\xi=\xi_0}=0$, then $w^*$ is spectrally unstable. 
\end{thm}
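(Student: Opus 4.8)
The plan is to reduce spectral instability of $w^*$ to the existence of a strictly positive eigenvalue of $L$, and to manufacture that eigenvalue by showing that the conjugate operator $\mathbb{L}$ has Morse index at least one. The two bridges I would use are the identity \eqref{eq:morse index about r.d.eq.} coming from Theorem \ref{thm:main result}, which computes $\iMor(\mathbb{L})$ as a sum of intersection dimensions, and the weighted spectral correspondence $\phi\in\ker(L-\lambda\Id)\iff e^{\frac{c\xi}{2}}\phi\in\ker(\mathbb{L}+\lambda\Id)$ for $\lambda\in\C\setminus\C^-$ established in the Lemma immediately above.

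First I would exploit the hypothesis geometrically. Evaluating the inclusion \eqref{eq:L_D cap E^u} at $\tau=\xi_0$ and using $\dot w^*(\xi_0)=0$, the second component of the displayed vector vanishes, so that
\[
\begin{pmatrix} e^{\frac{c\xi_0}{2}}\ddot w^*(\xi_0) \\ 0 \end{pmatrix}\in \Lambda_D\cap E^u(\xi_0).
\]
To see that this vector is nonzero I would invoke the profile equation \eqref{eq:station eq.}, which at $\xi_0$ reads $\ddot w^*(\xi_0)=-\nabla F\big(w^*(\xi_0)\big)$. Were the right-hand side to vanish, the phase point $\big(w^*(\xi_0),0\big)$ would be a rest point of the first-order system associated with \eqref{eq:station eq.}, forcing $w^*$ to be constant by uniqueness for the Cauchy problem and contradicting that $w^*$ is a nonconstant front. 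Hence $\ddot w^*(\xi_0)\neq 0$ and $\dim\big(\Lambda_D\cap E^u(\xi_0)\big)\geq 1$.

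Feeding this into \eqref{eq:morse index about r.d.eq.}, where every summand is nonnegative and the summand at $\tau=\xi_0$ is at least one, I obtain $\iMor(\mathbb{L})\geq 1$. Since under (H) the essential spectrum of the self-adjoint operator $\mathbb{L}=-\frac{\d^2}{\d\xi^2}+\frac{c^2}{4}\Id-B(\xi)$ lies in $(0,+\infty)$, the quantity $\iMor(\mathbb{L})$ genuinely counts negative eigenvalues, so $\mathbb{L}$ has an eigenvalue $-\lambda$ with $\lambda>0$ and eigenfunction $\psi\in\ker(\mathbb{L}+\lambda\Id)$. As $\lambda>0$ lies in $\C\setminus\C^-$, the correspondence of the Lemma applies and gives $\phi=e^{-\frac{c\xi}{2}}\psi\in\ker(L-\lambda\Id)$; that is, $\lambda>0$ is an eigenvalue of $L$. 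A positive eigenvalue rules out spectral stability, so $w^*$ is spectrally unstable.

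The two applications of the cited results are routine; the points where I would take most care are the nonvanishing of the intersection vector and the function-space validity of the pull-back. The first is exactly the observation $\ddot w^*(\xi_0)=-\nabla F(w^*(\xi_0))\neq 0$ drawn from the profile equation. For the second, I must confirm that $\phi=e^{-\frac{c\xi}{2}}\psi$ really lies in $W^{2,2}$, so that the $\Leftarrow$ direction of the Lemma may be used; this holds because, under (H), an eigenfunction of $\mathbb{L}$ for the negative eigenvalue $-\lambda$ decays at rate $\sqrt{\tfrac{c^2}{4}-B(\pm\infty)+\lambda}>\tfrac{|c|}{2}$, which dominates the growth of the weight $e^{-\frac{c\xi}{2}}$. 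I regard verifying this decay estimate, and hence that the correspondence is a genuine bijection of kernels on the admissible region $\C\setminus\C^-$, as the main technical obstacle.
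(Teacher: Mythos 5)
Your proposal is correct and takes essentially the same route as the paper's own proof, which likewise evaluates the inclusion \eqref{eq:L_D cap E^u} at $\tau=\xi_0$ to obtain $\dim\left(\Lambda_D\cap E^u(\xi_0)\right)\geq 1$, deduces $\iMor(\mathbb{L})\geq 1$ from \eqref{eq:morse index about r.d.eq.}, and converts the resulting negative eigenvalue of $\mathbb{L}$ into a positive eigenvalue of $L$ via the weighted kernel correspondence (Proposition \ref{pro:equivalent eigenvalue} with $Q=\Id$). The only difference is that you explicitly verify two points the paper leaves implicit --- the nonvanishing $\ddot w^*(\xi_0)=-\nabla F\left(w^*(\xi_0)\right)\neq 0$, ruled out by uniqueness for the profile equation \eqref{eq:station eq.} since $w^*$ is nonconstant, and the exponential decay estimate ensuring $e^{-\frac{c\xi}{2}}\psi\in W^{2,2}$ so the correspondence applies --- and both verifications are sound.
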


\begin{proof}
	From the given assumption, together with \eqref{eq:L_D cap E^u} and \eqref{eq:morse index about r.d.eq.}, we conclude
	\[
	\iMor(\mathbb{L})\geq \dim (\Lambda\cap E^u(\tau_0))\geq 1.
	\]
	Combined with Proposition \ref{pro:equivalent eigenvalue}, this implies that at least one positive eigenvalue exists for $L$, completing the proof.
	\end{proof}

	\section{Maslov, Hörmander, Triple Index and Spetral Flow}
	This final section is dedicated to recalling fundamental definitions, key results, and essential properties of the Maslov index and related invariants used throughout our analysis. Primary references include \cite{RS93,HP17,ZWZ18} and their cited works.
	
	\subsection{The Cappell-Lee-Miller Index}\label{sec:maslov}
	Consider the standard symplectic space $(\mathbb{R}^{2n}, \omega)$. Let $\Lag{n}$ denote the Lagrangian Grassmannian of $(\mathbb{R}^{2n}, \omega)$. For $a, b \in \mathbb{R}$ with $a < b$, define $\mathscr{P}([a, b]; \mathbb{R}^{2n})$ as the space of continuous Lagrangian pairs $L: [a, b] \to \Lag{n} \times \Lag{n}$ with compact-open topology. Following \cite{CLM94}, we recall the Maslov index for Lagrangian pairs, denoted by $\iCLM$. Intuitively, for $L = (L_1, L_2) \in \mathscr{P}([a, b]; \mathbb{R}^{2n})$, this index enumerates (with signs and multiplicities) instances $t \in [a, b]$ where $L_1(t) \cap L_2(t) \neq \{0\}$.
	
	\begin{defn}
	The $\iCLM$-index is the unique integer-valued function
	\begin{align*}
	\iCLM: \mathscr{P}([a, b]; \mathbb{R}^{2n}) \ni L \mapsto \iCLM(L(t); t \in [a, b]) \in \mathbb{Z}
	\end{align*}
	satisfying Properties I-VI in \cite[Section 1]{CLM94}.
	\end{defn}
	
	An effective approach to compute the Maslov index employs the crossing form introduced in \cite{RS93}. Let $\Lambda: [0,1] \to \Lag{n}$ be a smooth curve with $\Lambda(0) = \Lambda_0$, and $W$ a fixed Lagrangian complement of $\Lambda(t)$. For $v \in \Lambda_0$ and small $t$, define $w(t) \in W$ via $v + w(t) \in \Lambda(t)$. The quadratic form $Q(v) = \left.\frac{d}{dt}\right|_{t=0} \omega(v, w(t))$ is independent of $W$ \cite{RS93}. A crossing occurs at $t$ where $\Lambda(t)$ intersects $V \in \Lag{n}$ nontrivially. The crossing form at such $t$ is defined as
	\begin{align}\label{eq:def of crossing form in Maslov}
	\Gamma(\Lambda(t), V; t) = \left.Q\right|_{\Lambda(t) \cap V}.
	\end{align}
	
	A crossing is regular if its form is nondegenerate. For quadratic form $Q$, let $\sign(Q) = \coiMor(Q) - \iMor(Q)$ denote its signature. From \cite{ZL99}, if $\Lambda(t)$ has only regular crossings with $V$, then
	\begin{align}\label{eq:compute maslov by cross form}
	\begin{split}
		\iCLM(V, \Lambda(t); t \in [a,b]) = &\coiMor(\Gamma(\Lambda(a), V; a)) \\
	&+ \sum_{a<t<b} \sign \Gamma(\Lambda(t), V; t) - \iMor(\Gamma(\Lambda(b), V; b)).
	\end{split}
	\end{align}
	For the sake of the reader, we list a couple of properties of the $\iCLM$-index that we shall use throughout the paper.
	\begin{itemize}
		\item \textbf{(Reversal)} Let $L:=\left(L_1, L_2\right) \in \mathscr{P}\left([a, b] ; \mathbb{R}^{2 n}\right)$. Denoting by $\widehat{L} \in$ $\mathscr{P}\left([-b,-a] ; \mathbb{R}^{2 n}\right)$ the path traveled in the opposite direction, and by setting $\widehat{L}:=\left(L_1(-s), L_2(-s)\right)$, we obtain
	\begin{align*}
		\iCLM(\widehat{L} ;[-b,-a])=-\iCLM(L ;[a, b])
		\end{align*}
		\item \textbf{(Stratum homotopy relative to the ends)} Given a continuous map
		$L:[a, b] \ni s \rightarrow L(s) \in \mathscr{P}\left([a, b] ; \mathbb{R}^{2 n}\right)$ where $L(s)(t):=\left(L_1(s, t), L_2(s, t)\right)$
		such that $\dim L_1(s, a) \cap L_2(s, a)$ and $\dim L_1(s, b) \cap L_2(s, b)$ are both constant, and then,	
	\begin{align*}
		\iCLM(L(0) ;[a, b])=\iCLM(L(1) ;[a, b])
		\end{align*}
	\end{itemize}

	\subsection{Triple Index and Hörmander Index}
	We summarize key concepts about the triple and Hörmander indices, following \cite{ZWZ18}. For isotropic subspaces $\alpha, \beta, \delta$ in $(\mathbb{R}^{2n}, \omega)$, define the quadratic form
	\begin{align}\label{eq:definition q}
	\mathscr{Q} \coloneqq \mathscr{Q}(\alpha, \beta; \delta): \alpha \cap (\beta + \delta) \to \mathbb{R},\quad \mathscr{Q}(x_1, x_2) = \omega(y_1, z_2)
	\end{align}
	where $x_j = y_j + z_j \in \alpha \cap (\beta + \delta)$ with $y_j \in \beta$, $z_j \in \delta$. For Lagrangian subspaces $\alpha, \beta, \delta$, \cite[Lemma 3.3]{ZWZ18} gives
	\begin{align*}
	\ker \mathscr{Q}(\alpha, \beta; \delta) = \alpha \cap \beta + \alpha \cap \delta.
	\end{align*}
	
	\begin{defn}
	For Lagrangians $\alpha, \beta, \kappa$ in $(\mathbb{R}^{2n}, \omega)$, the triple index is
	\begin{align}\label{eq:def of trip 1}
	\iota(\alpha, \beta, \kappa) = \iMor(\mathscr{Q}(\alpha, \delta; \beta)) + \iMor(\mathscr{Q}(\beta, \delta; \kappa)) - \iMor(\mathscr{Q}(\alpha, \delta; \kappa))
	\end{align}
	where $\delta$ satisfies $\delta \cap \alpha = \delta \cap \beta = \delta \cap \kappa = \{0\}$.
	\end{defn}
	
	By \cite[Lemma 3.13]{ZWZ18}, this index also satisfies
	\begin{align}\label{eq:def of trip 2}
	\iota(\alpha, \beta, \kappa) = \coiMor(\mathscr{Q}(\alpha, \beta; \kappa)) + \dim(\alpha \cap \kappa) - \dim(\alpha \cap \beta \cap \kappa).
	\end{align}
	
	The Hörmander index measures the difference between Maslov indices relative to different Lagrangians. For paths $\Lambda, V \in \mathscr{C}^0([0,1], \Lag{n})$ with endpoints $\Lambda(0)=\Lambda_0$, $\Lambda(1)=\Lambda_1$, $V(0)=V_0$, $V(1)=V_1$:
	
	\begin{defn}\label{def:hormand}
	The Hörmander index is
	\begin{align}\label{eq:def of Hormand}
	\begin{split}
		s(\Lambda_0, \Lambda_1; V_0, V_1) &= \iCLM(V_1, \Lambda(t); t \in [0,1]) - \iCLM(V_0, \Lambda(t); t \in [0,1]) \\
	&= \iCLM(V(t), \Lambda_1; t \in [0,1]) - \iCLM(V(t), \Lambda_0; t \in [0,1]).
	\end{split}
	\end{align}
	\end{defn}
	
	\begin{rem}
	Homotopy invariance ensures Definition \ref{def:hormand} is well-posed (cf. \cite{RS93}).
	\end{rem}
	
	For four Lagrangians $\lambda_1, \lambda_2, \kappa_1, \kappa_2$, \cite[Theorem 1.1]{ZWZ18} establishes:
	\begin{align}\label{eq:relationship Hormand trip}
	s(\lambda_1, \lambda_2; \kappa_1, \kappa_2) = \iota(\lambda_1, \lambda_2, \kappa_2) - \iota(\lambda_1, \lambda_2, \kappa_1) = \iota(\lambda_1, \kappa_1, \kappa_2) - \iota(\lambda_2, \kappa_1, \kappa_2).
	\end{align}
	
	\subsection{Spectral Flow}
	Introduced by Atiyah-Patodi-Singer \cite{APS76}, spectral flow measures eigenvalue crossings. Let $E$ be a real separable Hilbert space, and $\mathscr{CF}^{sa}(E)$ denote closed self-adjoint Fredholm operators with gap topology. For continuous $A: [0,1] \to \mathscr{CF}^{sa}(E)$, the spectral flow $\spfl{A_t; t \in [0,1]}$ counts signed eigenvalue crossings through $-\epsilon$ ($\epsilon > 0$ small).
	
	For each $A_t$, consider the orthogonal decomposition
	\begin{align*}
	E = E_{-}(A_t) \oplus E_0(A_t) \oplus E_{+}(A_t).
	\end{align*}
	Let $P_t$ be the orthogonal projector onto $E_0(A_t)$. At crossing $t_0$ where $E_0(A_{t_0}) \neq \{0\}$, define the crossing form
	\begin{align}\label{eq:def of cross form in sf}
	\operatorname{Cr}[A_{t_0}] \coloneqq P_{t_0} \frac{\partial}{\partial t} P_{t_0}: E_0(A_{t_0}) \to E_0(A_{t_0}).
	\end{align}
	
	A crossing is regular if $\operatorname{Cr}[A_{t_0}]$ is nondegenerate. Define
	\begin{align*}
	\sgn(\operatorname{Cr}[A_{t_0}]) \coloneqq \dim E_{+}(\operatorname{Cr}[A_{t_0}]) - \dim E_{-}(\operatorname{Cr}[A_{t_0}]).
	\end{align*}
	
	Assuming regular crossings, the spectral flow becomes
	\begin{align}\label{eq:compute sf by crossing form}
	\spfl{A_t; t \in [0,1]} = \sum_{t_0 \in \mathcal{S}_*} \sgn(\operatorname{Cr}[A_{t_0}]) - \dim E_{-}(\operatorname{Cr}[A_0]) + \dim E_{+}(\operatorname{Cr}[A_1])
	\end{align}
	where $\mathcal{S}_* = \mathcal{S} \cap (a,b)$ contains crossings in $(a,b)$.
	
	For the sake of the reader we list some properties of the spectral flow that we shall frequently use in the paper.

	\begin{itemize}
		\item \textbf{(Stratum homotopy relative to the ends)} Given a continuous map
		\begin{align*}
		\bar{A}:[0,1] \rightarrow \mathscr{C}^0\left([a, b] ; \mathscr{C} \mathscr{F}^{s a}(E)\right) \text { where } \bar{A}(s)(t):=\bar{A}^s(t)
		\end{align*}	
		such that $\operatorname{dim} \operatorname{ker} \bar{A}^s(a)$ and $\operatorname{dim} \operatorname{ker} \bar{A}^s(b)$ are both independent on $s$, then	
		\begin{align*}
		\operatorname{sf}\left(\bar{A}_t^0 ; t \in[a, b]\right)=\operatorname{sf}\left(\bar{A}_t^1 ; t \in[a, b]\right)
		\end{align*}
		\item \textbf{(Path additivity)} If $A^1, A^2 \in \mathscr{C}^0\left([a, b] ; \mathscr{C} \mathscr{F}^{s a}(E)\right)$ are such that $A^1(b)=A^2(a)$, then
		\begin{align*}
		\operatorname{sf}\left(A_t^1 * A_t^2 ; t \in[a, b]\right)=\operatorname{sf}\left(A_t^1 ; t \in[a, b]\right)+\operatorname{sf}\left(A_t^2 ; t \in[a, b]\right)
		\end{align*}	
		where $*$ denotes the usual catenation between the two paths.
		\item \textbf{(Nullity) }If $A \in \mathscr{C}^0([a, b] ; \operatorname{GL}(E))$, then $\operatorname{sf}\left(A_t ; t \in[a, b]\right)=0$.
	\end{itemize}

	\section*{Acknowledgments}
	The authors thank Professor Xijun Hu for the valuable discussions. The second author gratefully acknowledges the hospitality of the School of Mathematics at Shandong University, where this work was partly carried out during his visit.

%
%% % % % % ================================================================

%%%%%%%%%%%%%%%%%%%%%%%%%%%%%%%%%%%%

\vspace{1cm}
\noindent
\textsc{Dr. Ran Yang}\\
School of Science\\
East China  University of Technology\\
Nanchang, Jiangxi, 330013\\
The People's Republic of China \\
E-mail:\ \email{201960124@ecut.edu.cn}

\vspace{1cm}
\noindent
\textsc{Dr. Qin Xing}\\
School of Mathematics and Statistics, Linyi University\\
 Linyi, Shandong \\
The People's Republic of China\\
E-mail:\ \email{xingqin@lyu.edu.cn}

\end{document}